\newtheorem{theorem}{Theorem}[section]
\newtheorem{lemma}[theorem]{Lemma}
\newtheorem{definition}{Definition}[section]
\newtheorem{corollary}[theorem]{Corollary}
\newtheorem{remark}[theorem]{Remark}
\newcommand{\cl}[1]{\mathcal{#1}} 
\newcommand{\sca}[1]{\left\langle#1\right\rangle} 
\newcommand{\Map}[1]{\mathrm{Map}(#1)} 
\newcommand{\Ref}[1]{\mathrm{Ref}(#1)} 
\newcommand{\Lat}[1]{\mathrm{Lat}(#1)} 
\newcommand{\Alg}[1]{\mathrm{Alg}(#1)} 
\title{Stable properties of hyperrelexivity}
\author{G.K. ELEFTHERAKIS}
\address{}
\email{gelefth@math.upatras.gr}
\begin{document}

\maketitle

\begin{abstract}Recently a new equivalence relation between weak* closed operator spaces acting on Hilbert spaces has appeared. Two weak* 
closed operator spaces $\cl U, \cl V$ are called weak TRO equivalent if there exist ternary rings of operators $\cl M_i, i=1,2$ such that 
$ \cl U=[\cl M_2\cl V\cl M_1^*]^{-w^*} , \cl V=[\cl M_2^*\cl U\cl M_1]^{-w^*} .$
Weak TRO equivalent 
spaces  are stably isomorphic, and conversely, stably isomorphic dual operator spaces have normal completely isometric representations 
with weak TRO equivalent images. In this paper, we prove that if $\cl U$ and $\cl V$ are weak TRO equivalent operator spaces 
 and the space of $I\times I$ matrices with entries in $\cl U, M_I^w(\cl U),$ is hyperreflexive for suitable infinite I, 
then so is 
 $M_I^w(\cl V).$ 
We describe situations where if $\cl L_1, \cl L_2$ are isomorphic lattices, then the corresponding algebras 
$\Alg{\cl L_1}, \Alg{\cl L_2} $ have the same complete hyperreflexivity constant.
\end{abstract}

\section{Introduction}

Recently a new equivalence relation between weak* closed operator spaces acting on Hilbert spaces has appeared:

\begin{definition}\label{11}\cite{ept} Let $H_i, K_i, i=1,2$ be Hilbert spaces, and $\cl U\subset B(K_1, K_2), \cl V\subset B(H_1, H_2)$ be  
weak* closed spaces. 
We call them weak 
TRO equivalent if there exist ternary rings of operators (TRO's) $\cl M_i\subset B(H_i, K_i), i=1,2$, i.e. spaces satisfying 
$\cl M_i\cl M_i^*\cl M_i\subset M_i, i=1,2$, such that 
$$ \cl U=[\cl M_2\cl V\cl M_1^*]^{-w^*} , \cl V=[\cl M_2^*\cl U\cl M_1]^{-w^*} .$$
\end{definition}

This notion is related to the very important notion of stable isomorphism of operator spaces:

\begin{theorem}\label{12}\cite{ept} If $\cl U$ and $\cl V$ are weak TRO equivalent operator spaces then $\cl U$ and $\cl V$ 
are weakly stably isomorphic. 
This means that there exists a cardinal $I$ such that the spaces $ \cl U \bar \otimes  B(l^2(I)) , \cl V\bar \otimes  B(l^2(I)) $ 
are completely isometrically isomorphic through a weak* continuous map. Here, $\bar \otimes  $ is the 
normal spatial tensor product. Conversely, if $\cl U$ and $\cl V$ are weakly stably isomorphic, then they 
have completely isometric weak* continuous 
representations $\phi $ and $\psi $ such that $\phi (\cl U)$  and $\psi (\cl V)$ are weak TRO equivalent.
\end{theorem}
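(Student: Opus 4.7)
The plan is to establish the two implications of Theorem~\ref{12} separately, both going through the structure theory of weak* closed ternary rings of operators.

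For the forward implication, suppose $\cl U$ and $\cl V$ are weak TRO equivalent via $\cl M_1,\cl M_2$. I would first analyze the von Neumann algebras $[\cl M_i\cl M_i^*]^{-w^*}$ and $[\cl M_i^*\cl M_i]^{-w^*}$; they serve as the range and source rings, with $\cl M_i$ acting as a weak*-Morita-equivalence bimodule between them. Next I would amplify: for any cardinal $I$ exceeding the densities of the ambient Hilbert spaces and the central supports of the left/right projections of the $\cl M_i$, the amplified TROs $\cl M_i\bar\otimes B(l^2(I))$ can, after conjugation by suitable unitaries, be put into a standard form that contains the identity. Then $\Phi(X)=(\cl M_2\otimes 1)X(\cl M_1^*\otimes 1)$ descends to a weak* continuous complete isometry from $\cl V\bar\otimes B(l^2(I))$ onto $\cl U\bar\otimes B(l^2(I))$, with inverse $Y\mapsto(\cl M_2^*\otimes 1)Y(\cl M_1\otimes 1)$; the identities defining weak TRO equivalence, combined with $\cl M_i\cl M_i^*\cl M_i\subset\cl M_i$, are exactly what force these maps to be mutually inverse.

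For the converse, assume $\Phi:\cl U\bar\otimes B(l^2(I))\to\cl V\bar\otimes B(l^2(I))$ is a weak* continuous complete isometry. I would invoke Paulsen's $2\times 2$ systems trick: embed the two spaces as off-diagonal corners of their Paulsen systems and extend $\Phi$ to a normal $*$-isomorphism between the generated $W^*$-algebras. Any normal $*$-isomorphism between von Neumann algebras is, after a further amplification, spatially implemented by unitaries; restricting these unitaries to the off-diagonal blocks carved out by the coordinate projections produces TROs that implement a weak TRO equivalence between the resulting images $\phi(\cl U)$ and $\psi(\cl V)$.

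The main obstacle will be the coherent handling of the amplifications. In the forward direction one must control how the TROs $\cl M_i$ stabilize, which requires comparing central-valued dimension functions of the left/right support projections; this is precisely where the cardinal $I$ must be chosen large enough. In the converse, the delicate step is to verify that the blocks extracted from the spatial implementation of $\Phi$ genuinely satisfy the ternary relation $\cl N\cl N^*\cl N\subset\cl N$ and implement $\Phi$ on the nose, rather than merely yielding an abstract bimodule equivalence.
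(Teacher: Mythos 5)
You should first be aware that the paper contains no proof of this statement: Theorem \ref{12} is quoted from \cite{ept}, so your sketch can only be measured against the argument there. Your forward direction is broadly the right strategy and matches the known proof in spirit: amplify by an infinite cardinal $I$ dominating the dimensions of the underlying Hilbert spaces and use the structure of essential weak* closed TROs --- precisely the tool this paper imports as Lemma \ref{111}, namely a column $M=(M_i)_{i\in I}\in C_I^w(\cl M)$ of partial isometries with pairwise orthogonal initial projections and $M^*M=I$. Two of your steps are off, though. A TRO cannot in general be ``conjugated to a standard form that contains the identity''; what one actually gets is the isometric column above, and no comparison of central-valued dimension functions is needed, since the infinite amplification swallows all multiplicity questions. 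More seriously, your maps $X\mapsto(\cl M_2\otimes 1)X(\cl M_1^*\otimes 1)$ and $Y\mapsto(\cl M_2^*\otimes 1)Y(\cl M_1\otimes 1)$ are not mutually inverse pointwise: with isometric columns $M_i$ the composition produces the compression $M_2M_2^*\,Y\,M_1M_1^*$, so surjectivity onto the full amplified space requires a further argument using the essentiality of the TROs and the infiniteness of $I$; this is where the actual work in \cite{ept} lies, and your sketch treats it as automatic.

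The genuine gap is in your converse. The Paulsen $2\times 2$ trick does not do what you claim: a weak* continuous surjective complete isometry $\Phi$ induces a unital complete order isomorphism between the Paulsen systems, but it does \emph{not} extend to a (normal) $*$-isomorphism of the generated $W^*$-algebras. Concretely, take $\cl X=\mathbb{C}I_{l^2(\mathbb N)}$ and $\cl Y=\mathbb{C}S$ with $S$ the unilateral shift; the map $\lambda I\mapsto \lambda S$ is a weak* homeomorphic complete isometry (since $S$ is an isometry, $\|[\lambda_{ij}S]\|=\|[\lambda_{ij}]\|$), yet the generated von Neumann algebras are $\mathbb{C}I$ and $B(l^2(\mathbb N))$, which are not isomorphic. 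An extension to a $*$- (or triple) isomorphism is available only at the level of the C*-envelope/triple envelope, which is exactly why the theorem asserts only the \emph{existence} of some normal completely isometric representations $\phi,\psi$ with weak TRO equivalent images, rather than TRO equivalence of the originally given concrete spaces. Since your implementing TROs are extracted from the spatial implementation of this nonexistent $*$-isomorphism, the converse collapses at its first step. The repair is to pass to the normal triple envelope: a weak* continuous surjective complete isometry induces a normal triple isomorphism of normal triple envelopes, which after amplification is spatially implemented, and the off-diagonal intertwiner spaces then do furnish the required TROs --- in substance the $\Delta$-equivalence machinery of \cite{ept} and \cite{ep}.
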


In Section 3  of this paper, we prove that if $\cl U$ and $\cl V$ are weak TRO equivalent operator spaces,  
 and if the weak* closed space of $I\times I$ matrices with entries in $\cl U, M_I^w(\cl U)$  is hyperreflexive 
for suitable infinite I, then so is 
 $M_I^w(\cl V).$  In the case of separably acting $\cl U$ and $\cl V$, we have $k(M_\infty ^w(\cl U))=k(M_\infty ^w(\cl U))$  
 where $k(\cl X)$ is the hyperreflexivity constant of $\cl X$ and $\infty$ is aleph 0. As a consequence, in Section 4 we prove that if $\cl A$ and $\cl B$ are stably isomorphic CSL algebras acting 
on separable Hilbert spaces and if $\cl A$ is completely hyperreflexive, then $\cl B$ is also a completely hyperreflexive space with the 
same complete hyperreflexivity constant. We also prove that if 
 $\cl L_i\subset B(H_i), i=1,2$ are separably acting reflexive lattices and there exists a *-isomorphism 
$\theta : \cl L_1^{\prime \prime} \rightarrow \cl L_2^{\prime \prime} $ such that $\theta (\cl L_1)=\cl L_2$, then 
$\Alg{\cl L_1} $ is completely hyperreflexive iff $\Alg{\cl L_2} $ is completely hyperreflexive. 
We also prove that if $\cl L_i\subset B(H_i), i=1,2$ are  totally atomic separably acting isomorphic CSL's, then 
$\Alg{\cl L_1} $ is completely hyperreflexive iff $\Alg{\cl L_2} $ is completely hyperreflexive. 
Finally, we prove that separably acting von Neumann algebras with isomorphic commutants have the same complete  
hyperreflexive constant.

\bigskip

In what follows, the symbol $[\cl{S}]$ denotes the linear span of $\cl{S}.$ If $\cl{L}\subset B(H)$, we 
denote by $\cl{L}^\prime$ the set of operators which commute with the elements of 
$\cl{L}$. The set of projections in $\cl{L}$ is written $pr(\cl{L}).$ If $T$ is an operator and $I$ is a cardinal, 
 $T^I $ denotes the $I\times I$ diagonal matrix with diagonal entries $T.$  If $\cl X$ is a space of operators, we define $\cl X^I  $ to be the space 
containing all operators of the form 
$T^I $ where $T\in \cl X.$

A set of projections of a Hilbert space is called a lattice if it contains the zero and identity projections and 
is closed under arbitrary suprema and infima. If $\cl{A}$ is a subalgebra of $B(H)$ for some Hilbert space $H$, the set
$$\Lat{\cl A }=\{L\in pr(B(H)): L^\bot \cl{A}L=0\}$$
is a lattice. Dually, if $\cl{L}$ is a lattice, the space
$$\Alg{\cl L}=\{A\in B(H): L^\bot AL=0 \;\;\forall\;\; L\in \cl{L}\}$$
is an algebra. A lattice $\cl{L}$ is called reflexive if $\Lat {\Alg{\cl L}}=\cl L.$

 A commutative subspace lattice  (CSL) is  a projection
lattice $\cl{L}$ whose elements commute; the algebra $\mathrm {Alg}(\cl{L})$ is called a CSL algebra. Two CSL's 
$\cl L_1, \cl L_2$ are called isomorphic if there exists an order preserving  1-1 and onto map from $\cl L_1$ onto $\cl L_2.$

\medskip

 Let $H_1, H_2$ be Hilbert spaces and $\cl{U}$ a subset of $B(H_1,H_2).$ The reflexive hull of $\cl{U}$ is defined  to be the space
$$\Ref{ \cl U}=\{T\in B(H_1,H_2): Tx\in \overline{[\cl{U}x]} \;\text{for\;each}\;x\in H_1\}.$$
A subspace $\cl{U}$ is called reflexive if $\cl{U}=\Ref{\cl U}.$  
\medskip

Let $\cl U$ be a subspace of $\cl B(H,K)$. If $T\in \cl B(H,K)$, we call
$$d(T, \cl U) =\inf_{ X\in \cl U }\|T-X\|$$ 
the distance from $T$ to $\cl U$. 
We also set
$$r_{\cl U}(T) =\sup_{\|\xi \|=\|\eta \|=1} \{|\sca{T\xi ,\eta }|: \sca{U\xi, \eta }=0\;\;\forall \;U\in \cl U\}.$$
Trivially, $r_{\cl U}(T)\leq d(T,\cl U)$. 
We can see that $\cl U$ is reflexive if $r_{\cl U}(T) = 0$ implies that $T\in \cl U$, 
for $T\in \cl B(H,K)$. 
If there exists $k>0$ such that 
$$d(T, \cl U) \leq k r_{\cl U}(T), \ \ \ T\in  B(H,K),$$
we say that the space $\cl U$ is hypereflexive.
The space $\cl U$ is called completely hyperreflexive if
$\cl U\bar\otimes B(\cl H)$ is hyperreflexive, where $ H$ is a separable infinite dimensional Hilbert space.
 It is not known if hyperreflexivity implies complete hyperreflexivity.

If $\cl U$ is a reflexive space, let 
$$k(\cl U)=\sup_{T\not \in  \cl U}\frac{d(T, \cl U) }{r_{\cl U}(T) }$$ 
be the the hyperreflexivity constant of $\cl U$.
Clearly, $\cl U$ is hyperreflexive if and only if $k(\cl U)<\infty.$ 

Throughout this paper we shall use the following Lemma.

\begin{lemma}\label{13} Let $\cl U\subset B(K_1, K_2)$ be a weak* closed space, $K_1, K_2$ Hilbert spaces,  
and $\cl B$ and $\cl A$ von Neumann algebras such that 
$\cl B\cl U\cl A\subset \cl U.$ Then for every $T\in  B(K_1, K_2),$  
$$r_\cl U(T)=\sup \{\|QTP\|: Q\in pr(\cl B^\prime), P\in pr(\cl A^\prime), Q\cl UP=0\}.$$
\end{lemma}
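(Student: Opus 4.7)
I would prove the two inequalities separately, exploiting the bimodule property $\cl B\cl U\cl A\subset\cl U$ to produce the relevant spectral projections.

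For the inequality ``$\geq$'' (which is the substantive direction), fix unit vectors $\xi\in K_1,\eta\in K_2$ with $\langle U\xi,\eta\rangle=0$ for every $U\in\cl U$. I would set
$$P=\text{projection onto }\overline{[\cl A\xi]},\qquad Q=\text{projection onto }\overline{[\cl B\eta]}.$$
Since $\cl A,\cl B$ are selfadjoint unital algebras, these subspaces are invariant under $\cl A$ and $\cl B$ respectively, so $P\in pr(\cl A')$ and $Q\in pr(\cl B')$; moreover $P\xi=\xi$ and $Q\eta=\eta$ because $I\in\cl A,\cl B$. To verify $Q\cl UP=0$, I would check it on the dense set of vectors of the form $a\xi$ and test against $b\eta$: for $a\in\cl A$, $b\in\cl B$, $U\in\cl U$,
$$\langle QUPa\xi,b\eta\rangle=\langle Ua\xi,b\eta\rangle=\langle b^{*}Ua\xi,\eta\rangle=0,$$
since $b^{*}Ua\in\cl B\cl U\cl A\subset\cl U$. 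Density and continuity then give $QUP=0$. Since $P\xi=\xi$, $Q\eta=\eta$, we conclude
$$\|QTP\|\geq|\langle QTP\xi,\eta\rangle|=|\langle T\xi,\eta\rangle|,$$
and taking the supremum over admissible pairs $(\xi,\eta)$ shows the right-hand supremum bounds $r_{\cl U}(T)$ from above.

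For the inequality ``$\leq$'', suppose $Q\in pr(\cl B')$, $P\in pr(\cl A')$ with $Q\cl UP=0$. For arbitrary unit vectors $x,y$ I would form $\xi=Px/\|Px\|$, $\eta=Qy/\|Qy\|$ (discarding pairs where either vanishes, which contribute $0$ to $\|QTP\|$). These are unit vectors, and for every $U\in\cl U$,
$$\langle U\xi,\eta\rangle=\frac{1}{\|Px\|\|Qy\|}\langle QUPx,y\rangle=0,$$
so $(\xi,\eta)$ is admissible in the definition of $r_{\cl U}(T)$. Consequently
$$|\langle QTPx,y\rangle|=\|Px\|\|Qy\|\,|\langle T\xi,\eta\rangle|\leq r_{\cl U}(T),$$
and taking the supremum over unit $x,y$ yields $\|QTP\|\leq r_{\cl U}(T)$.

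The only step that requires any care is the $Q\cl UP=0$ verification, where one must reduce to dense subspaces and invoke the bimodule containment $\cl B\cl U\cl A\subset\cl U$ at just the right moment; everything else is a direct unwinding of the definitions. No separability or weak* closedness assumption is actually needed for this lemma, although $\cl U$ being weak* closed (and the unitality of the von Neumann algebras) enters implicitly through the cleanness of the invariant-subspace construction.
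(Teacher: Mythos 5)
Your proposal is correct and follows essentially the same route as the paper's own proof: both directions use the identical normalization argument for ``$\leq$'' and the identical construction of $P,Q$ as projections onto $\overline{[\cl A\xi]}$ and $\overline{[\cl B\eta]}$ (with $Q\cl UP=0$ verified via $b^*Ua\in\cl B\cl U\cl A\subset\cl U$) for ``$\geq$''. The only differences are cosmetic: the paper phrases the hard direction with an $\eps$-approximation of the supremum, and your closing remark is right that weak* closedness is never actually used.
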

\begin{proof} Choose $T\in  B(K_1, K_2)$  and $Q\in pr(\cl B^\prime), P\in pr(\cl A^\prime)$ such that $ Q\cl UP=0.$
We have $$\|QTP\|= \sup_{\|\xi \|=\|\eta \|=1}|\sca{QTP\xi ,\eta }| =\sup_{\|\xi \|=\|\eta \|=1}|\sca{TP\xi ,Q\eta }| .$$
Since $\sca{UP\xi ,Q\eta }=0, \forall U\in \cl U, \xi ,\eta $ we have 
 $$|\sca{TP\xi ,Q\eta }|\leq r_\cl U(T)\|P\xi \|\|Q\eta \|\leq r_\cl U(T).$$

For the converse inequality,  suppose $\epsilon >0$.  Then there exist unit vectors $\xi, \eta $ such that 
$\sca{U\xi ,\eta }=0,   \;\;\forall \;\;U\in \cl U$ and 
$$r_\cl U(T)-\epsilon < |\sca{T\xi ,\eta }| .$$ Since $\sca{UA\xi ,B\eta }=0, \;\;\forall \;A\in \cl A, \;\;\forall \;B\in \cl B$ 
if $P$ is the projection onto the space generated by $\cl A\xi $ and $Q$ is the projection onto the space generated by $\cl B\eta $, 
we have $ Q\cl UP=0$ and $Q\in pr(\cl B^\prime), P\in pr(\cl A^\prime).$ But $$|\sca{T\xi ,\eta }| =|\sca{QTP\xi ,\eta }|\leq \|QTP\| .$$ 
Since $\epsilon $ is arbitrary, the proof is complete.
\end{proof}

\medskip

Now we present some concepts introduced in \cite{erd}.

Let $\cl{P}_i=pr(B(H_i)), i=1,2.$ Let $\phi=\mathrm{Map} (\cl{U})$ be the map $\phi:\cl{P}_1\rightarrow \cl{P}_2$, which to each
$P\in\cl{P}_1$ associates the projection onto the subspace
$[TPy:T\in\;\cl{U},y\in\;H_1]^{-}.$ The map $\phi$ is $\vee-$continuous (that is, it preserves arbitrary suprema) and is $0$ preserving.

Let
$\phi^*=\mathrm{Map} (\mathcal{U}^*),\mathcal{S}_{1,\phi}=\{\phi^*(P)^{\bot}:P\in\cl{P}_2\},\mathcal{S}_{2,\phi}=\{\phi(P):P\in\cl{P}_1\}$ and observe that $\cl{S}_{1,\phi }=\cl{S}_{2,\phi ^*}^\bot .$
Erdos proved that $\mathcal{S}_{1,\phi}$\;is $\wedge $-complete and
contains the identity projection, $\mathcal{S}_{2,\phi}$\;is $\vee $-complete and contains the zero projection, while
$\phi|_{\mathcal{S}_{1,\phi}}:\mathcal{S}_{1,\phi}\rightarrow\mathcal{S}_{2,\phi}$\;is
a bijection. 

In fact, $$\mathrm{Ref} (\cl{U})=\{T\in\;B(H_1,H_2):\phi(P)^{\bot}TP=0\; \text{for
each}\;P\in\mathcal{S}_{1,\phi}\}.$$
When $\phi(I)=I$ and $\phi^*(I)=I$, we call the space $\cl{U}$ essential.

\medskip

In \cite{kt} it is proved that a TRO $\cl{M}$ is weak* closed if and only if  it is $wot$ closed if and only if  it is
reflexive. In this case, if $\chi =\Map{\cl M}$,
$$\cl{M}=\{T\in B(H_1,H_2): TP=\chi (P)T\;\text{for\;all\;}P\in \cl{S}_{1,\chi }\}.$$
 In the following theorem we isolate some consequences of \cite[Theorem 2.10]{kt}.

\begin{theorem}\label{14}(i) A TRO $\cl{M}$ is essential if and only if the algebras 
$[\cl{M}^*\cl{M}]^{-w^*}$\\$ [\cl{M}\cl{M}^*]^{-w^*}$ contain the identity operators.

(ii) If $\cl{M}$ is an essential TRO and $\chi =\Map{\cl M}$, then
$\cl{S}_{1,\chi }=pr((\cl{M}^*\cl{M})^\prime),\\ \cl{S}_{2,\chi }=pr((\cl{M}\cl{M}^*)
^\prime)$ and the map $\chi |_{\cl{S}_{1,\chi }}: \cl{S}_{1,\chi }\rightarrow \cl{S}_
{2,\chi }$ is an ortholattice isomorphism with inverse $\chi^* |_{\cl{S}_{2,\chi }}.$

\end{theorem}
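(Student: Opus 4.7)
\textbf{Plan for Theorem \ref{14}.}

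For part (i), the plan is to identify $\chi(I)$ and $\chi^*(I)$ with support projections of naturally associated von Neumann algebras. Unwinding the definition of $\chi=\Map{\cl M}$, the projection $\chi(I)$ is onto $[\cl M H_1]^-\subset H_2$, and since $\overline{mH_1}=\overline{mm^*H_2}$ for any bounded operator $m$, this coincides with $[\cl M\cl M^*H_2]^-$. The TRO relation $\cl M\cl M^*\cl M\subset \cl M$ makes $[\cl M\cl M^*]$ a self-adjoint subalgebra of $B(H_2)$, so its weak* closure $[\cl M\cl M^*]^{-w^*}$ is a (possibly non-unital) von Neumann algebra whose support (unit) projection is precisely the projection onto $[\cl M\cl M^*H_2]^-$. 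Consequently $\chi(I)=I$ holds if and only if this von Neumann algebra is non-degenerate, equivalently contains the identity of $B(H_2)$. The statement for $\chi^*(I)$ is symmetric.

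For part (ii), the easy inclusion $\cl S_{1,\chi}\subset pr((\cl M^*\cl M)')$ uses the reflexivity description of a TRO recalled just before the theorem: for $P\in\cl S_{1,\chi}$ and $m\in\cl M$ one has $mP=\chi(P)m$. Taking adjoints gives $Pm^*=m^*\chi(P)$, and combining these two identities for $m,n\in\cl M$ yields $(m^*n)P=m^*\chi(P)n=P(m^*n)$, so $P$ commutes with $\cl M^*\cl M$.

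The reverse inclusion is the main content and the main obstacle. Given $P\in pr((\cl M^*\cl M)')$, set $Q:=\chi(P^\perp)$; the target is $\chi^*(Q)=P^\perp$, for then $P=\chi^*(Q)^\perp\in\cl S_{1,\chi}$. By the definitions, $\chi^*(Q)H_1=[\cl M^*\cl M P^\perp H_1]^-$. The containment $\chi^*(Q)\leq P^\perp$ is immediate from the commutation of $P$ with $\cl M^*\cl M$. For the reverse, the TRO identity again makes $[\cl M^*\cl M]$ a self-adjoint algebra on $H_1$, so its weak* closure equals $(\cl M^*\cl M)''$, and by Kaplansky density, for any $\eta\in P^\perp H_1$ the norm closure $[(\cl M^*\cl M)\eta]^-$ coincides with $[(\cl M^*\cl M)''\eta]^-$. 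Essentiality combined with part (i) gives $I\in(\cl M^*\cl M)''$, so each such $\eta$ satisfies $\eta=I\eta\in[(\cl M^*\cl M)''\eta]^-=[(\cl M^*\cl M)\eta]^-\subset\chi^*(Q)H_1$, proving $P^\perp\leq\chi^*(Q)$. The analogous equality $\cl S_{2,\chi}=pr((\cl M\cl M^*)')$ follows by applying the argument to $\cl M^*$, whose Erdos map is $\chi^*$. Finally, the ortholattice isomorphism $\chi|_{\cl S_{1,\chi}}$ with inverse $\chi^*|_{\cl S_{2,\chi}}$ is immediate once the two sets have been identified with the projection lattices of two von Neumann algebras, using the general Erdos facts about the bijection between $\cl S_{1,\chi}$ and $\cl S_{2,\chi}$ quoted earlier in the excerpt.
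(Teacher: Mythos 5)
Your proof is correct, but it takes a genuinely different route from the paper: the paper offers no argument at all for Theorem \ref{14} --- it presents the statement explicitly as a list of consequences of \cite[Theorem 2.10]{kt} --- whereas you derive it directly from the Erdos formalism together with standard von Neumann algebra theory. Your part (i) is exactly right: $\chi(I)$ is the projection onto $[\cl M H_1]^-=[\cl M\cl M^*H_2]^-$ (since $\overline{\mathrm{ran}\,m}=\overline{\mathrm{ran}\,mm^*}$), and this is the support projection of the W*-algebra $[\cl M\cl M^*]^{-w^*}$, so $\chi(I)=I$ iff that algebra contains $I_{H_2}$, and symmetrically for $\chi^*(I)$. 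In part (ii) the forward inclusion correctly exploits the intertwining description $\cl M=\{T: TP=\chi(P)T,\ P\in\cl S_{1,\chi}\}$; note that this description is itself the quoted \cite{kt} result for weak*-closed TROs, so your argument is not independent of \cite{kt} --- rather it isolates precisely which ingredient of that theorem is needed, which is a worthwhile clarification (and it does tacitly require $\cl M$ weak* closed, harmless since $\chi$ and the algebras $[\cl M\cl M^*]^{-w^*}$, $[\cl M^*\cl M]^{-w^*}$ are unchanged by passing to the weak* closure). The reverse inclusion is sound, and essentiality enters exactly where it must: $(\cl M^*\cl M)''$ contains $I$ for free, but only essentiality via part (i) gives $I\in[\cl M^*\cl M]^{-w^*}$, whence this weak* closure equals $(\cl M^*\cl M)''$ by the double commutant theorem and $[(\cl M^*\cl M)\eta]^-=[(\cl M^*\cl M)''\eta]^-$ by (Kaplansky/SOT) density, yielding $P^\perp\leq\chi^*(\chi(P^\perp))$ as you claim. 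Two compressed points at the end deserve a line each rather than the word ``immediate'': first, that $\chi|_{\cl S_{1,\chi}}$ and $\chi^*|_{\cl S_{2,\chi}}$ are mutually inverse follows from your own computation, not merely the bare Erdos bijection --- since $pr((\cl M^*\cl M)')$ is complement-closed, your identity $\chi^*(\chi(P^\perp))=P^\perp$ gives $\chi^*(\chi(P))=P$ for every $P\in\cl S_{1,\chi}$, and symmetrically for $\chi\circ\chi^*$; second, orthocomplement preservation is \emph{not} automatic for an order isomorphism of projection lattices (lattice complements are not unique, and order isomorphisms need not respect $\perp$), but it follows in one line from the intertwining relation, $mP^\perp=m-mP=\chi(P)^\perp m$, together with $\chi(I)=I$, which give $\chi(P^\perp)=\chi(P)^\perp$. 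With those two sentences made explicit, your argument is a complete, self-contained substitute for the paper's citation: the citation buys brevity, while your route buys a proof from first principles that pinpoints the role of essentiality.
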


\medskip

If $K_1, K_2$ are Hilbert spaces, $\cl U\subset B(K_1, K_2)$ is a weak* closed operator space, and $I$ is a cardinal, then $M^w_I (\cl U)$ is the set of 
$I \times I $ matrices with entries in $\cl U$ whose finite submatrices have uniformly bounded 
norm, \cite{bm}. We consider $M^w_I (\cl U)$  as a subspace of the set of bounded operators from $K_1^I $ to 
$K_2^I.$ 
We can see that 
the space $M^w_I (\cl U)$  is unitarily equivalent with $ \cl U\bar \otimes B(l^2(I)) .$ 
Therefore if 
$\cl U$ is a completely hyperreflexive space, then $k(  \cl U\bar \otimes B(l^2(\mathbb N))  )=k(M^w_\infty (\cl U) ).$
 Also, $ C_I ^w(\cl U) $ is the subspace of $I\times 1$ columns with entries in $\cl U$, or, equivalently, the space of bounded operators from $K_1$ to $K_2^I $ of the form $(U_i)_{i\in I}$, where 
every $U_i$ belongs to $\cl U.$ 
\begin{lemma} $$k(C_I ^w(\cl U) )\leq k(M_I^w(\cl U)).$$
\end{lemma}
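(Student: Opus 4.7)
The plan is to realize $C_I^w(\cl U)$ as the ``first-column slice'' of $M_I^w(\cl U)$ and transfer the hyperreflexivity estimate across this embedding. I fix the isometry $E_1 : K_1 \to K_1^I$ identifying $K_1$ with the first coordinate of $K_1^I$, and I embed $C_I^w(\cl U)$ into $M_I^w(\cl U)$ via $X \mapsto X E_1^*$, the matrix whose first column is $X$ and whose other entries vanish (this lies in $M_I^w(\cl U)$ because $0 \in \cl U$ and its finite submatrix norms coincide with those of $X$). For arbitrary $T \in B(K_1, K_2^I)$ I define $\tilde T := T E_1^* \in B(K_1^I, K_2^I)$.

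The first step is the distance identity $d(T, C_I^w(\cl U)) = d(\tilde T, M_I^w(\cl U))$. The inequality $\leq$ comes from observing that if $Y \in M_I^w(\cl U)$ then $Y E_1$ is its first column and lies in $C_I^w(\cl U)$, with $\|T - Y E_1\| = \|(\tilde T - Y)E_1\| \leq \|\tilde T - Y\|$. Conversely, if $X \in C_I^w(\cl U)$ then $X E_1^* \in M_I^w(\cl U)$ and, since $E_1^* E_1 = I_{K_1}$,
$$\|T - X\| = \|(T - X)E_1^* E_1\| \leq \|(T-X)E_1^*\| = \|\tilde T - X E_1^*\|.$$

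Next I establish $r_{M_I^w(\cl U)}(\tilde T) \leq r_{C_I^w(\cl U)}(T)$. Take unit vectors $\xi \in K_1^I$, $\eta \in K_2^I$ with $\sca{U\xi, \eta} = 0$ for every $U \in M_I^w(\cl U)$, and specialize to $U = V E_1^*$ with $V \in C_I^w(\cl U)$: this yields $\sca{V \xi_1, \eta} = 0$ for every such $V$, where $\xi_1 := E_1^* \xi \in K_1$. A routine rescaling then gives
$$|\sca{\tilde T \xi, \eta}| = |\sca{T \xi_1, \eta}| \leq r_{C_I^w(\cl U)}(T)\,\|\xi_1\|\|\eta\| \leq r_{C_I^w(\cl U)}(T).$$

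Finally, for $T \notin C_I^w(\cl U)$ one has $\tilde T \notin M_I^w(\cl U)$ (otherwise $T = \tilde T E_1$ would lie in $C_I^w(\cl U)$), so chaining the two steps gives
$$d(T, C_I^w(\cl U)) = d(\tilde T, M_I^w(\cl U)) \leq k(M_I^w(\cl U))\, r_{M_I^w(\cl U)}(\tilde T) \leq k(M_I^w(\cl U))\, r_{C_I^w(\cl U)}(T),$$
and taking the supremum over such $T$ yields the claim. I do not anticipate a substantive obstacle: the argument is really a one-sided compression against the partial isometry $E_1$, and the only point that deserves explicit verification is that the embedding $X \mapsto X E_1^*$ does land in $M_I^w(\cl U)$, which is immediate from the definition.
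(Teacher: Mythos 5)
Your proof is correct, and it rests on the same geometric idea as the paper's --- realizing $C_I^w(\cl U)$ as a single column slice of $M_I^w(\cl U)$ --- but where the paper outsources the key estimate to the literature, you verify it by hand, so the routes genuinely differ in substance. The paper takes the diagonal idempotent $E$ with $I_{K_1}$ in one diagonal entry and zeros elsewhere, observes that $M_I^w(\cl U)E$ is precisely your embedded copy $\{XE_1^*: X\in C_I^w(\cl U)\}$, invokes Lemma 6.2 of Davidson and Levene \cite{dale} to get $k(M_I^w(\cl U)E)\leq k(M_I^w(\cl U))$, and then identifies $k(C_I^w(\cl U))\leq k(M_I^w(\cl U)E)$ as obvious. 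You instead prove the transfer directly through the isometry $E_1$: the distance comparison $d(T, C_I^w(\cl U))\leq d(\tilde T, M_I^w(\cl U))$ via the slicing map $Y\mapsto YE_1$, and the separation comparison $r_{M_I^w(\cl U)}(\tilde T)\leq r_{C_I^w(\cl U)}(T)$ by testing annihilating pairs only against operators $VE_1^*$ from the embedded copy, with the rescaling of $\xi_1=E_1^*\xi$ handled correctly (including, implicitly, the degenerate case $\xi_1=0$). What the paper's route buys is brevity and generality, since the cited lemma covers compression by a projection in much wider circumstances; what yours buys is a self-contained, citation-free argument at the cost of a page of elementary estimates. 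One cosmetic slip: your ``conversely'' display establishes $\|T-X\|\leq \|\tilde T - XE_1^*\|$, which is the direction \emph{not} needed to conclude $d(\tilde T, M_I^w(\cl U))\leq d(T, C_I^w(\cl U))$; the needed direction is the trivial bound $\|(T-X)E_1^*\|\leq \|T-X\|$ (in fact equality holds, as $E_1^*$ is a coisometry). Since your final chain uses only $d(T, C_I^w(\cl U))\leq d(\tilde T, M_I^w(\cl U))$, which you do prove via $Y\mapsto YE_1$, nothing in the conclusion is affected.
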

\begin{proof} We denote by $E=(E_{i,j})_{i,j\in I}$ the $I\times I$ matrix where $E_{i_0,i_0}=I_{K_1}$ 
and $E_{i,j}=0$ for $(i,j)\neq (i_0,i_0).$ Observe that $ M_I^w(\cl U)E $ contains elements of the form $(C_i)_{i\in I},$ 
where $C_{i_0}=C_I^w(\cl U)$ and $C_i$ is a zero column for $i\neq i_0.$ 

Lemma 6.2 in \cite{dale} implies that $k(M_I^w(\cl U)E )\leq k(M_I^w(\cl U) ).$ Obviously 
$$k(C_I^w(\cl U))\leq k(M_I^w(\cl U)E ).$$  
\end{proof}

\medskip

In this paper we shall use the following lemma from 8.5.23 in \cite{bm}.

\begin{lemma} \label{111}If $\cl M\subset B(K_1, K_2)$ is an essential weak* closed TRO,  and $K_1, K_2$ are  Hilbert spaces,  
and $I$ is the cardinal of an orthonormal basis of $K_1$, there exists a column $M=(M_i)_{i\in I}\in 
C_I ^w(\cl M)  $ where 
every $M_i$ is a partial isometry such that $M_i^*M_i$ is orthogonal to $M_j^*M_j$ for every $i\neq j$ and such that 
 $M^*M=I_{K_1}.$
\end{lemma}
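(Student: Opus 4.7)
The plan is to build the column $M$ by a Zorn/maximality argument applied to families of partial isometries in $\cl M$ with mutually orthogonal initial projections, and then to use the hypothesis that $\cl M$ is essential to show that the resulting sum of initial projections must be $I_{K_1}$.

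First I would introduce the von Neumann algebra $\cl B = [\cl M^*\cl M]^{-w^*} \subset B(K_1)$; by Theorem~\ref{14}(i), essentialness of $\cl M$ gives $I_{K_1}\in \cl B$. I would then consider the collection $\cl F$ of all families $\{M_j\}_{j\in J}\subset \cl M$ of partial isometries such that the projections $\{M_j^*M_j\}$ are mutually orthogonal, ordered by inclusion. Zorn's lemma produces a maximal family $\{M_j\}_{j\in J_0}$. Setting $P=\sum_{j\in J_0} M_j^*M_j$ (the weak-operator limit of finite partial sums), $P$ is a projection lying in $\cl B$ and $P\leq I_{K_1}$.

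The main step is to show $P=I_{K_1}$. Assume otherwise, so $I_{K_1}-P\neq 0$. I would first argue that $\cl M(I_{K_1}-P)\neq 0$: if it were zero then $\cl M=\cl M P$, hence $\cl M^*\cl M\subset P\,B(K_1)\,P$, forcing $\cl B\subset P\,B(K_1)\,P$, which contradicts $I_{K_1}\in\cl B$. Pick $N\in\cl M$ with $N(I_{K_1}-P)\neq 0$ and replace $N$ by $N(I_{K_1}-P)$. Take the polar decomposition $N=V|N|$; since $\cl M$ is a weak*-closed TRO, $V\in\cl M$ (this uses the standard functional-calculus argument that $(NN^*)^n N\in \cl M$ together with $\cl M\cl M^*\cl M\subset \cl M$ and weak* closure). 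Then $V^*V$ equals the support projection of $|N|$, and since $|N|^2=(I_{K_1}-P)N^*N(I_{K_1}-P)$, we have $V^*V\leq I_{K_1}-P$, so $V^*V$ is orthogonal to every $M_j^*M_j$. Adjoining $V$ to the family contradicts maximality; therefore $P=I_{K_1}$.

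Finally I would handle the indexing: the family $\{M_j^*M_j\}_{j\in J_0}$ is a set of mutually orthogonal nonzero projections on $K_1$, so $|J_0|\leq \dim K_1=|I|$; after padding with zero entries we reindex as $(M_i)_{i\in I}$. The identity $M^*M=\sum_i M_i^*M_i=I_{K_1}$ then holds in the strong operator topology, $M$ is an isometry from $K_1$ into $K_2^I$, and all finite submatrices have norm $\leq 1$, so $M\in C_I^w(\cl M)$, completing the proof. The only nontrivial points are the polar-decomposition fact for weak*-closed TROs (which I would invoke as standard) and the use of essentialness via $I_{K_1}\in[\cl M^*\cl M]^{-w^*}$ to force the maximal sum to exhaust $K_1$; the latter is the step most likely to require careful argument.
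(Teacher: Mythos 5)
Your proof is correct, but note that the paper itself offers no proof of this lemma at all: it is quoted verbatim as 8.5.23 from Blecher--Le Merdy \cite{bm}, so there is no internal argument to compare against. What you have written is, in essence, a self-contained reconstruction of the standard quasi-orthonormal-basis argument for W*-modules that underlies that citation: Zorn's lemma applied to families of partial isometries in $\cl M$ with mutually orthogonal initial projections, polar decomposition inside the weak*-closed TRO to enlarge a non-maximal family, and essentiality (via Theorem \ref{14}(i), $I_{K_1}\in[\cl M^*\cl M]^{-w^*}$) to force the maximal family to exhaust $K_1$. All the key steps check out: the argument that $\cl M(I_{K_1}-P)=0$ would force $\cl B\subset PB(K_1)P$ and hence $P=I_{K_1}$ is sound; the fact that the polar partial isometry of $N\in\cl M$ stays in $\cl M$ follows, as you indicate, from $N\,q(N^*N)=q(NN^*)N\in\cl M$ for polynomials $q$, then norm and weak* limits; and the cardinality bound $|J_0|\le\dim K_1=|I|$ plus zero-padding is exactly how one lands in $C_I^w(\cl M)$. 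Two small points you should make explicit rather than implicit: first, the replacement of $N$ by $N(I_{K_1}-P)$ requires $NP\in\cl M$, which follows since each $NM_j^*M_j\in\cl M\cl M^*\cl M\subset\cl M$, finite sums stay in the subspace $\cl M$, and the bounded net of partial sums converges weak*; second, the SOT-convergent sum $\sum_i M_i^*M_i=I_{K_1}$ is what gives both $M^*M=I_{K_1}$ and the uniform bound on finite submatrices needed for membership in $C_I^w(\cl M)$. With those sentences added, your argument is a complete proof of the lemma, arguably a useful addition given that the paper delegates it entirely to \cite{bm}.
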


\section{Weak TRO equivalence of operator spaces}

In this section we fix  Hilbert spaces $H_1, H_2, K_1, K_2$ and essential reflexive operator spaces
 $$\cl U\subset B(K_1, K_2), \;\;\;\cl V\subset B(H_1 , H_2)$$ which are weak TRO equivalent: i.e., there exist TRO's $\cl M_i\subset 
B(H_i, K_i), i=1,2$ such that 
$$ \cl U=[\cl M_2\cl V\cl M_1^*]^{-w^*} , \cl V=[\cl M_2^*\cl U\cl M_1]^{-w^*} .$$
We assume that 
$$\phi=\mathrm{Map} (\cl{U}), \psi =\mathrm{Map} (\cl{V}), \chi _i=\mathrm{Map} (\cl M_i), \;\;i=1,2, $$$$
\cl B_i=\cl S_{i, \phi }^\prime\subset B(K_i), \cl A_i=\cl S_{i, \psi  }^\prime\subset B(H_i), i=1,2.$$ 
In this section we are going to find *-isomorphisms $\zeta _i: \cl B_i^\prime \rightarrow \cl A_i^\prime, i=1,2$ 
such that if $$\cl N_i=\{T\in B(H_i, K_i): T\zeta _i(P)=PT\;\;\forall \;P\in pr(\cl B_i^\prime )\}, \;\;i=1,2,$$ 
then $$ \cl U=[\cl N_2\cl V\cl N_1^*]^{-w^*} , \cl V=[\cl N_2^*\cl U\cl N_1]^{-w^*} .$$

\begin{lemma}\label{21} $$ \cl A_i=[\cl M_i^*\cl B_i\cl M_i]^{-w^*}, \cl B_i=[\cl M_i\cl A_i\cl M_i^*]^{-w^*}, \;\;i=1,2. $$
\end{lemma}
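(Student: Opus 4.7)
The four equalities split into two symmetric pairs under the interchange $\cl U\leftrightarrow\cl V$, $\cl B_i\leftrightarrow\cl A_i$, $\cl M_i\leftrightarrow\cl M_i^*$, so the plan is to prove in detail just $\cl A_1=[\cl M_1^*\cl B_1\cl M_1]^{-w^*}$ and $\cl B_1=[\cl M_1\cl A_1\cl M_1^*]^{-w^*}$; the $i=2$ versions are handled identically, and the two ``$\cl A$ vs $\cl B$'' halves of the proof will in fact be run in parallel.

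The preliminary observation I would establish first is that $\cl B_2\cl U\cl B_1\subseteq\cl U$ and $\cl A_2\cl V\cl A_1\subseteq\cl V$. This uses only the reflexivity characterization: since $\cl U=\{T:\phi(P)^\bot TP=0\ \forall P\in\cl S_{1,\phi}\}$ and any $B_1\in\cl B_1=\cl S_{1,\phi}'$ commutes with each such $P$, one has $\phi(P)^\bot UB_1P=\phi(P)^\bot UPB_1=0$ for every $U\in\cl U$, so $UB_1\in\Ref{\cl U}=\cl U$. The analogous arguments on the other side (using that $\cl B_2$ commutes with $\phi(P)$ for $P\in\cl S_{2,\phi}$, applied to $\cl U^*$) and for $\cl V$ (using $\psi$ in place of $\phi$) give the remaining inclusions.

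For the inclusion $[\cl M_1^*\cl B_1\cl M_1]^{-w^*}\subseteq\cl A_1$, I would take a generic element $M^*BN$ with $M,N\in\cl M_1$ and $B\in\cl B_1$, and verify it commutes with every $P\in\cl S_{1,\psi}$ by applying the reflexivity formula $\cl V=\{T:\psi(P)^\bot TP=0\ \forall P\in\cl S_{1,\psi}\}$. By weak$^*$-density, it suffices to test on elements of the form $V=M_2^*UM_1'\in\cl M_2^*\cl U\cl M_1$; the TRO identity $\cl M_1\cl M_1^*\cl M_1\subseteq\cl M_1$ lets one absorb $M^*$ against $M_1'\in\cl M_1$ back into the TRO, after which the module identity $\cl U\cl B_1\subseteq\cl U$ from the preliminary step finishes the reduction. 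The reverse inclusion $\cl A_1\subseteq[\cl M_1^*\cl B_1\cl M_1]^{-w^*}$ I would approach by invoking Theorem \ref{14}(i): essentiality of $\cl M_1$ gives $I_{H_1}\in[\cl M_1^*\cl M_1]^{-w^*}$, so one can weak$^*$-approximate $I_{H_1}$ by finite sums $\sum M_\alpha^*N_\alpha$, write $A=I_{H_1}AI_{H_1}$ as the weak$^*$-limit of $\sum M_\alpha^*(N_\alpha AM_\beta^*)N_\beta$, and recognise each middle factor as an element of $\cl M_1\cl A_1\cl M_1^*\subseteq\cl B_1$ by the parallel inclusion being proved for $\cl B_1=[\cl M_1\cl A_1\cl M_1^*]^{-w^*}$.

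The step I expect to be most delicate is the $\subseteq$ direction, because it requires coordinating four different projection lattices ($\cl S_{1,\phi},\cl S_{2,\phi}, \cl S_{1,\chi_1}, \cl S_{2,\chi_1}$) and exploiting the cross-commutation that TRO equivalence forces among them; in particular, one must see that the compression of a projection in $\cl S_{1,\psi}$ by the TRO corresponds to a projection in $\cl S_{1,\phi}$, which is exactly the compatibility encoded in the weak TRO equivalence but which must be unpacked carefully. The reverse inclusion, by contrast, is essentially a bookkeeping exercise once essentiality of $\cl M_i$ is in hand and the companion inclusion $\cl M_1\cl A_1\cl M_1^*\subseteq\cl B_1$ has been established by the symmetric argument.
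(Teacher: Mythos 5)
Your overall route is viable and genuinely different from the paper's: where the paper proves $\cl M_2^*\cl B_2\cl M_2\subseteq\cl A_2$ by a projection-map computation (establishing $\phi(\chi_1(Q))^\bot\cl M_2\psi(Q)=0$ and $\psi(Q)^\bot\cl M_2^*\phi(\chi_1(Q))=0$ for every projection $Q$ on $H_1$, then sandwiching $M^*BN$ between $\psi(Q)^\bot$ and $\psi(Q)$), you replace this lattice coordination by pure module algebra, and you replace the paper's citation of Proposition 2.1 of \cite{ele} by an inline essentiality bootstrap; indeed the ``delicate coordination of four lattices'' you worry about is a feature of the paper's proof, not of yours, since your reduction never touches $\chi_1$. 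However, the endgame of your forward inclusion has a genuine gap. What your density/absorption argument actually proves is the module property $\cl V\,(\cl M_1^*\cl B_1\cl M_1)\subseteq\cl V$ (and even this needs the intermediate fact $\cl U\cl M_1\cl M_1^*\subseteq\cl U$, with weak*-closure bookkeeping, which you only gesture at). You then claim commutation with every $P\in\cl S_{1,\psi}$ ``by applying the reflexivity formula'' for $\cl V$ --- but that formula tests membership of operators from $H_1$ to $H_2$ in $\cl V$; it cannot certify that an operator on $H_1$ commutes with the projections in $\cl S_{1,\psi}$. The correct bridge is Erdos's description $\cl S_{1,\psi}=\{\psi^*(P)^\bot\}$ with $\psi^*=\Map{\cl V^*}$, so that $\psi^*(P)$ is the projection onto $\overline{[\cl V^*PH_2]}$: the property $\cl VX\subseteq\cl V$ yields only the one-sided relation $\psi^*(P)X\psi^*(P)^\bot=0$, and to get the other half, $\psi^*(P)^\bot X\psi^*(P)=0$, you need $\cl VX^*\subseteq\cl V$ as well, which holds precisely because the set $\cl M_1^*\cl B_1\cl M_1$ is selfadjoint ($(M^*BN)^*=N^*B^*M$). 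You never invoke selfadjointness; note that the paper makes the corresponding step explicit (``Since $\cl B_2$ is a selfadjoint algebra\ldots''). Without it you obtain semi-invariance, not membership in $\cl A_1=\cl S_{1,\psi}'$.

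There is a second, smaller gap in your reverse inclusion: you start from $I_{H_1}\in[\cl M_1^*\cl M_1]^{-w^*}$ ``by essentiality of $\cl M_1$,'' but the standing hypotheses of Section 2 make the \emph{spaces} $\cl U,\cl V$ essential, not the implementing TROs, and Theorem \ref{14}(i) only characterizes essentiality of a TRO, it does not grant it. You must derive it: for instance, essentiality of $\cl V$ gives $\overline{[\cl V^*H_2]}=H_1$, and since $\cl V^*=[\cl M_1^*\cl U^*\cl M_2]^{-w^*}$ this forces $\overline{[\cl M_1^*K_1]}=H_1$, whence the unit of the von Neumann algebra $[\cl M_1^*\cl M_1]^{-w^*}$ is $I_{H_1}$. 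This is routine but not free; it is exactly the nondegeneracy that the paper's appeal to Proposition 2.1 of \cite{ele} packages away. With the $\psi^*$/selfadjointness bridge supplied and essentiality of the $\cl M_i$ established, your argument does prove the lemma.
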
 
\begin{proof} Choose $$Q\in pr(B(H_1))\Rightarrow \psi (Q)\in \cl S_{2,\psi }.$$ 
Since $\cl M_2\cl V\subset \cl U\cl M_1$, we have $\cl M_2\cl VQ\subset \cl U\cl M_1Q.$  The projection onto the space 
generated by $\cl VQ(H_1)$ is $\psi (Q)$ and the projection onto the space 
generated by $\cl U\cl M_1Q(H_1)$ is $ \phi (\chi_1(Q)).$  Thus 
\begin{equation}\label{ex1} \phi (\chi_1(Q))  ^\bot \cl M_2\psi (Q)=0.
\end{equation}
Since $$\cl M_2^*\cl U\cl M_1Q\subset \cl VQ,$$ we have 
\begin{equation}\label{ex2} \psi (Q)^\bot \cl M_2^* \phi (\chi_1(Q)) =0.
\end{equation}
If $B\in \cl B_2, M, N\in \cl M_2,$ then by using (\ref{ex1})  we have 
$$M^*BN\psi (Q)= M^*B\phi (\chi_1(Q)) N\psi (Q) =M^*\phi (\chi_1(Q))BN\psi (Q) .$$
Using (\ref{ex2}), the last operator is equal to $\psi(Q)M^*BN \psi (Q).$ Therefore 
$$\psi (Q)^\bot \cl M_2^*\cl B_2\cl M_2\psi (Q)=0.$$
Since $\cl B_2$ is a selfadjoint algebra, we also have $$\psi (Q) \cl M_2^*\cl B_2\cl M_2 \psi (Q)^\bot =0.$$
Therefore $$\cl M_2^*\cl B_2\cl M_2 \subset \cl S_{2,\psi }^\prime=\cl A_2.$$
Similarly, we can prove that $$\cl M_2\cl A_2\cl M_2 ^*\subset \cl B_2.$$
Proposition 2.1 in \cite{ele} implies that 
$$ \cl A_2=[\cl M_2^*\cl B_2\cl M_2]^{-w^*}, \cl B_2=[\cl M_2\cl A_2\cl M_2^*]^{-w^*}. $$
Similarly, we can prove $$ \cl A_1=[\cl M_1^*\cl B_1\cl M_1]^{-w^*}, \cl B_1=[\cl M_1\cl A_1\cl M_1^*]^{-w^*}. $$

\end{proof}

By Proposition 2.8 in \cite{ele}, the map $$\chi _i^*: pr( \cl B_i^\prime )\rightarrow pr(\cl A_i^\prime ), i=1,2$$ 
extends to a *-isomorphism $\zeta _i: \cl B_i^\prime \rightarrow \cl A_i^\prime , i=1,2$, and if 
$$\cl N_i=\{T\in B(H_i, K_i): T\zeta _i(P)=PT\;\;\forall \;P\in pr(\cl B_i^\prime )\}, \;\;i=1,2,$$ 
then $$ \cl A_i=[\cl N_i^*\cl N_i]^{-w^*},   \;\;\;\cl B_i=[\cl N_i\cl N_i^*]^{-w^*}.   $$
Define the algebras $$ \cl B= \left(\begin{array}{clr}  \cl B_2 & \cl U \\ 0 & \cl B_1\end{array}\right), 
\;\;\;\cl A= \left(\begin{array}{clr}  \cl A_2 & \cl V \\ 0 & \cl A_1\end{array}\right).$$ 
These algebras are reflexive with lattices 
$$ \Lat {\cl B}=\{P_2\oplus P_1: P_i\in pr(\cl B_i^\prime), P_2^\bot \cl UP_1=0\} $$ and 
$$\Lat {\cl A}=\{Q_2\oplus Q_1: Q_i\in pr(\cl A_i^\prime), Q_2^\bot \cl VQ_1=0\} .$$

\begin{lemma}\label{22} The algebras $\cl A, \cl B$ are weak TRO equivalent.
\end{lemma}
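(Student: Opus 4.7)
The plan is to exhibit the weak TRO equivalence between $\cl A$ and $\cl B$ using a single block-diagonal TRO playing the role of both $\cl M_1$ and $\cl M_2$ in Definition \ref{11}, namely
$$\cl N = \left\{\begin{pmatrix} N_2 & 0 \\ 0 & N_1 \end{pmatrix}: N_i\in \cl N_i,\; i=1,2\right\}\subset B(H_2\oplus H_1,\, K_2\oplus K_1).$$
Since each $\cl N_i$ is a weak* closed TRO, block-diagonal multiplication immediately gives $\cl N\cl N^*\cl N\subset \cl N$ and weak*-closedness. What remains is to verify $\cl B=[\cl N\cl A\cl N^*]^{-w^*}$, with the symmetric identity $\cl A=[\cl N^*\cl B\cl N]^{-w^*}$ following along the same lines.

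Expanding $\cl N\cl A\cl N^*$ blockwise yields an upper-triangular array with diagonal entries $\cl N_i\cl A_i\cl N_i^*$ and $(1,2)$-entry $\cl N_2\cl V\cl N_1^*$. For the diagonals, the identities $\cl A_i=[\cl N_i^*\cl N_i]^{-w^*}$ and $\cl B_i=[\cl N_i\cl N_i^*]^{-w^*}$ recalled just above the lemma, combined with weak* continuity of one-sided multiplication by a fixed operator, give
$$[\cl N_i\cl A_i\cl N_i^*]^{-w^*}=[\cl N_i\cl N_i^*\cl N_i\cl N_i^*]^{-w^*}=[\cl B_i\cl B_i]^{-w^*}=\cl B_i,$$
the last equality because $\cl B_i$ is a unital von Neumann algebra.

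The substantive work lies in the off-diagonal equality $[\cl N_2\cl V\cl N_1^*]^{-w^*}=\cl U$. For $\supset$, I first verify $\cl M_i\subset \cl N_i$: for $M\in \cl M_i$ and $P\in pr(\cl B_i^\prime)$, Theorem \ref{14}(ii) places $P$ in $\cl S_{2,\chi_i}$, so applying the reflexivity formula $MQ=\chi_i(Q)M$ for $Q\in \cl S_{1,\chi_i}$ with $Q=\chi_i^*(P)$ yields $M\chi_i^*(P)=PM$; since $\zeta_i$ extends $\chi_i^*$ on projections of $\cl B_i^\prime$, this is exactly the defining relation for $\cl N_i$. Hence $[\cl N_2\cl V\cl N_1^*]^{-w^*}\supset [\cl M_2\cl V\cl M_1^*]^{-w^*}=\cl U$. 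For $\subset$, substitute $\cl V=[\cl M_2^*\cl U\cl M_1]^{-w^*}$ and use weak* continuity of multiplication to rewrite
$$[\cl N_2\cl V\cl N_1^*]^{-w^*}=[\cl N_2\cl M_2^*\cl U\cl M_1\cl N_1^*]^{-w^*}.$$
Since $\cl M_i\subset \cl N_i$, the factors $\cl N_2\cl M_2^*$ and $\cl M_1\cl N_1^*$ both land in $[\cl N_i\cl N_i^*]^{-w^*}=\cl B_i$, so the right-hand side is contained in $[\cl B_2\cl U\cl B_1]^{-w^*}$. Finally, reflexivity of $\cl U$ forces $\cl B_2\cl U\cl B_1\subset \cl U$: for $B_j\in \cl B_j=\cl S_{j,\phi}^\prime$, $U\in \cl U$ and $P\in \cl S_{1,\phi}$, the commutations $\phi(P)^\bot B_2=B_2\phi(P)^\bot$ and $B_1P=PB_1$ yield $\phi(P)^\bot(B_2UB_1)P=B_2(\phi(P)^\bot UP)B_1=0$, so $B_2UB_1\in \Ref{\cl U}=\cl U$.

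The main obstacle is precisely this $\subset$ direction of the off-diagonal equality: it is the one point where the algebraic data that $\cl N_i$ generates $\cl A_i$ and $\cl B_i$ is not sufficient on its own, and one must genuinely invoke the inclusion $\cl M_i\subset \cl N_i$ and reflexivity of $\cl U$ in order to transport products of $\cl N$-factors back into $\cl U$. With this in hand, the roles of $(\cl U,\cl B)$ and $(\cl V,\cl A)$ can be interchanged to establish $\cl A=[\cl N^*\cl B\cl N]^{-w^*}$, completing the weak TRO equivalence.
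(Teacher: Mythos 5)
Your proposal is correct, but it takes a genuinely different route from the paper's. The paper implements the equivalence with the block-diagonal TRO $\cl M_2\oplus \cl M_1$ built from the \emph{original} TROs: the inclusions $(\cl M_2\oplus \cl M_1)^*\cl B(\cl M_2\oplus \cl M_1)\subset \cl A$ and $(\cl M_2\oplus \cl M_1)\cl A(\cl M_2\oplus \cl M_1)^*\subset \cl B$ drop out immediately from Lemma \ref{21} (diagonal blocks) and the defining relations $\cl M_2^*\cl U\cl M_1\subset \cl V$, $\cl M_2\cl V\cl M_1^*\subset \cl U$ (corner block), and Proposition 2.1 of \cite{ele} is then cited to upgrade these inclusions to the required equalities of weak* closed spans. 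You instead work with $\cl N_2\oplus \cl N_1$ and verify the equalities by hand. This costs more: you must prove $\cl M_i\subset \cl N_i$ — your argument is sound, though you silently use that $\cl M_i$ is essential (needed to apply Theorem \ref{14}(ii)), which does hold because essentiality of $\cl U$ and $\cl V$ forces $[\cl M_2H_2]^-=K_2$, $[\cl M_2^*K_2]^-=H_2$, etc., and you also need $pr(\cl B_i^\prime)\subset \cl S_{2,\chi_i}$, i.e. $\cl M_i\cl M_i^*\subset \cl B_i$, which follows from Lemma \ref{21} since $I\in \cl A_i$ — and you must invoke reflexivity of $\cl U$ and $\cl V$ for the bimodule steps $\cl B_2\cl U\cl B_1\subset \cl U$ and $\cl A_2\cl V\cl A_1\subset \cl V$. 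That invocation is genuinely necessary (the inclusion can fail for a merely weak* closed space: for $\cl U=\{aI+bN\}\subset M_2$ with $N$ the nilpotent Jordan block, the commutants $\cl B_i$ are the diagonals and $\cl B_2\cl U\cl B_1\not\subset \cl U$), and it is legitimate here since reflexivity is a standing hypothesis of Section 2. What your route buys is strictly more than the lemma: you show directly that the canonical TRO $\cl N=\cl N_2\oplus \cl N_1$ itself implements the equivalence, and you obtain $\cl U=[\cl N_2\cl V\cl N_1^*]^{-w^*}$, $\cl V=[\cl N_2^*\cl U\cl N_1]^{-w^*}$ en route — exactly the content of Theorem \ref{23}, which the paper only gets afterwards by combining Lemma \ref{22} with Theorem 3.3 of \cite{ele}. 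The paper's proof is shorter and delegates this extra information to the cited machinery; yours is more self-contained at this step, relying only on Proposition 2.8 of \cite{ele}, which the paper has already used before the lemma to produce $\zeta_i$ and the generation identities $\cl A_i=[\cl N_i^*\cl N_i]^{-w^*}$, $\cl B_i=[\cl N_i\cl N_i^*]^{-w^*}$.
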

\begin{proof} 
We have $$ \left(\begin{array}{clr}  \cl M_2^* &  0 \\ 0 & \cl M_1^*\end{array}\right)
\left(\begin{array}{clr}  \cl B_2 & \cl U \\ 0 & \cl B_1\end{array}\right)
\left(\begin{array}{clr} 
 \cl M_2 &  0 \\ 0 & \cl M_1\end{array}\right)=$$
$$\left(\begin{array}{clr}  \cl M_2^*\cl B_2\cl M_2 & \cl M_2^*\cl U \cl M_1\\ 0 & \cl M_1^*\cl B_1\cl M_1\end{array}\right)
\subset \left(\begin{array}{clr}  \cl A_2 & \cl V \\ 0 & \cl A_1\end{array}\right)=\cl A.$$
Similarly, we can prove that $$(\cl M_2\oplus \cl M_1)\cl A(\cl M_2\oplus \cl M_1)^*\subset \cl B.$$ 
Proposition 2.1 in \cite{ele} implies that  
$$[(\cl M_2\oplus \cl M_1)\cl A(\cl M_2\oplus \cl M_1)^*]^{-w^*}= \cl B$$ and  
 $$[(\cl M_2\oplus \cl M_1)^*\cl B(\cl M_2\oplus \cl M_1)]^{-w^*}=\cl A.$$ 

\end{proof}

By Theorem 3.3 in \cite{ele}, if $\chi =\mathrm{Map}(\cl M_2\oplus \cl M_1)$, then
$\chi ^*: pr(\cl B_2^\prime\oplus \cl B_1^\prime)\rightarrow  pr(\cl A_2^\prime\oplus \cl A_1^\prime)$ extends to a *-isomorphism 
$$\zeta : \cl B_2^\prime\oplus \cl B_1^\prime\rightarrow  \cl A_2^\prime\oplus \cl A_1^\prime $$ such that 
$\zeta (\Lat{\cl B})=\Lat{\cl A}.$ Observe that $\zeta =\zeta_2 \oplus \zeta_1. $ 
Also by Theorem 3.3 in \cite{ele}, the TRO 
$$\cl N=\{T: T\zeta (P)=PT\;\;\forall \;P\in pr(\cl B_2^\prime\oplus \cl B_1^\prime )\}=\cl N_2\oplus \cl N_1$$ 
 implements a TRO equivalence of the algebras $\cl A$ and $\cl B:$
$$ [\cl N\cl A\cl N^*]^{-w^*}=\cl B ,\;\;\; [\cl N^*\cl B\cl N]^{-w^*}=\cl A. $$

\begin{theorem}\label{23} $$ \cl U=[\cl N_2\cl V\cl N_1^*]^{-w^*} , \cl V=[\cl N_2^*\cl U\cl N_1]^{-w^*} .$$
\end{theorem}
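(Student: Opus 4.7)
The strategy is to read off the $(1,2)$-block entries from the TRO equivalence $[\cl N\cl A\cl N^*]^{-w^*}=\cl B$ and $[\cl N^*\cl B\cl N]^{-w^*}=\cl A$ established in Lemma \ref{22} and the paragraph following it. For the inclusion $[\cl N_2\cl V\cl N_1^*]^{-w^*}\subseteq \cl U$, given $N_2\in\cl N_2$, $V\in\cl V$, $N_1\in\cl N_1$, I exhibit $N_2 V N_1^*$ as the $(1,2)$-entry of an element of $\cl B$ via the block computation
$$\begin{pmatrix} N_2 & 0 \\ 0 & 0 \end{pmatrix}\begin{pmatrix} 0 & V \\ 0 & 0 \end{pmatrix}\begin{pmatrix} 0 & 0 \\ 0 & N_1^* \end{pmatrix} = \begin{pmatrix} 0 & N_2 V N_1^* \\ 0 & 0 \end{pmatrix} \in \cl N\cl A\cl N^*\subseteq \cl B.$$
The outer factors lie in $\cl N=\cl N_2\oplus\cl N_1$ (since $0\in\cl N_i$) and the middle factor lies in $\cl A$ (since $0\in\cl A_i$ and $V\in\cl V$). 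Reading off the $(1,2)$-entry gives $N_2 V N_1^*\in\cl U$, and weak* closedness of $\cl U$ yields $[\cl N_2\cl V\cl N_1^*]^{-w^*}\subseteq\cl U$.

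For the reverse inclusion, given $U\in\cl U$, the matrix $\begin{pmatrix} 0 & U \\ 0 & 0 \end{pmatrix}$ lies in $\cl B=[\cl N\cl A\cl N^*]^{-w^*}$ and hence is a weak* limit of a net in $[\cl N\cl A\cl N^*]$. Compression by the two coordinate projections of $K_2\oplus K_1$ is weak*-to-weak* continuous (since left and right multiplication by fixed operators are), and by the block-matrix calculation above sends each member of the net into $[\cl N_2\cl V\cl N_1^*]$, viewed inside the $(1,2)$-corner. Passing to the weak* limit gives $U\in[\cl N_2\cl V\cl N_1^*]^{-w^*}$, which closes the first identity.

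The second identity $\cl V=[\cl N_2^*\cl U\cl N_1]^{-w^*}$ is proved by exactly the same argument applied to the other half of the equivalence, $[\cl N^*\cl B\cl N]^{-w^*}=\cl A$; the analogous block-matrix computation identifies the $(1,2)$-entry of a generic element as $N_2^* U N_1$ with $N_2\in\cl N_2$, $U\in\cl U$, $N_1\in\cl N_1$. No real obstacle is anticipated: once the TRO equivalence of the upper-triangular algebras $\cl A$ and $\cl B$ is in hand, the theorem is essentially $2\times 2$ block bookkeeping combined with the weak*-continuity of corner compression, and the upper-triangular algebras have been constructed precisely so that their off-diagonal block carries the data of $\cl U$ and $\cl V$.
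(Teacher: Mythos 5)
Your proof is correct, and its first half is essentially the paper's: both obtain $\cl N_2\cl V\cl N_1^*\subset \cl U$ by exhibiting $N_2VN_1^*$ as the $(1,2)$-entry of an element of $\cl N\cl A\cl N^*\subset \cl B$. You diverge on the harder inclusion $\cl U\subset[\cl N_2\cl V\cl N_1^*]^{-w^*}$. The paper argues algebraically: from the block computation applied to $\cl N^*\cl B\cl N\subset\cl A$ it first extracts $\cl N_2^*\cl U\cl N_1\subset\cl V$, hence $\cl N_2\cl N_2^*\,\cl U\,\cl N_1\cl N_1^*\subset\cl N_2\cl V\cl N_1^*$, and then uses that the algebras $[\cl N_i\cl N_i^*]^{-w^*}=\cl B_i$ contain the identity, so an iterated limit along nets from $[\cl N_i\cl N_i^*]$ converging weak* to $I$ (using separate weak* continuity of multiplication) squeezes each $U\in\cl U$ into the closure. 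You instead invoke the full density statement $[\cl N\cl A\cl N^*]^{-w^*}=\cl B$ from Lemma \ref{22} and push a net down to the off-diagonal corner via the weak*-continuous compression $X\mapsto PXQ$, identifying the corner with $B(K_1,K_2)$ weak*-homeomorphically. Both routes are legitimate and rest on facts already established in Section 2; yours is arguably more economical in that it needs only one of the two density statements plus continuity of corner compression, while the paper's avoids any appeal to the corner identification and makes the role of unitality of the linking algebras $\cl B_i$ explicit (the same mechanism it reuses in Lemmas \ref{32} and \ref{33}). One small point you should make explicit: your displayed computation uses factors of $\cl N$ with a zero diagonal entry, whereas the net furnished by Lemma \ref{22} consists of sums of generic products
$$\begin{pmatrix} N_2 & 0 \\ 0 & N_1\end{pmatrix}\begin{pmatrix} A_2 & V \\ 0 & A_1\end{pmatrix}\begin{pmatrix} M_2^* & 0 \\ 0 & M_1^*\end{pmatrix}=\begin{pmatrix} N_2A_2M_2^* & N_2VM_1^* \\ 0 & N_1A_1M_1^*\end{pmatrix},$$
with the left and right factors of $\cl N$ independent; the $(1,2)$-entry $N_2VM_1^*$ still lies in $\cl N_2\cl V\cl N_1^*$, so your compression argument goes through, but the verification should be stated for this generic product rather than the special one you display.
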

\begin{proof} We have 
$$
\left(\begin{array}{clr}  \cl N_2 & 0 \\ 0 & \cl N_1\end{array}\right)
\left(\begin{array}{clr}  0  & \cl V \\ 0 & 0 \end{array}\right)\left(\begin{array}{clr}  \cl N_2^* & 0 \\ 0 & \cl N_1^*
\end{array}\right)\subset \cl B.$$ 
Thus $$ \cl N_2\cl V\cl N_1^* \subset \cl U.$$
Similarly, $$ \cl N_2^*\cl U\cl N_1\subset \cl V  \Rightarrow \cl N_2\cl N_2^*\cl U\cl N_1\cl N_1^*\subset \cl N_2\cl V \cl N_1^* .$$
Since the algebras $[\cl N_i\cl N_i^*]^{-w^*}$ contain the identity operators, the last relation implies that 
$$ \cl U\subset [\cl N_2\cl V\cl N_1^*]^{-w^*}\Rightarrow  \cl U= [\cl N_2\cl V\cl N_1^*]^{-w^*}.$$
Similarly, we can prove $$\cl V= [\cl N_2^*\cl U\cl N_1]^{-w^*}.$$

\end{proof}

\section {Hyperreflexivity and weak TRO equivalence}

In this section we fix  Hilbert spaces $H_1, H_2 ,K_1, K_2$ and essential weak* closed spaces 
$$\cl U\subset B(K_1, K_2), \;\;\;\cl V\subset B(H_1 , H_2)$$ that are weak TRO equivalent. 
We fix an infinite  cardinal  $I$ greater than or equal to the maximum of $I_i, i=1,2$ where $I_i$ is the cardinal 
of an orthonormal basis of $H_i.$ We are going to prove that $k( M_I^w(\cl V) )\leq k(M_I^w(\cl U) ).$  
If $k(M_I^w(\cl U) )=\infty   $, the inequality is obvious. So we assume throughout this section that  $k(M_I^w(\cl U) )<\infty   .$ 
From the results of Section 2, there exist von Neumann algebras $\cl B_i\subset B(K_i), \cl A_i\subset B(H_i), i=1,2$
 and *-isomorphisms $\zeta _i: \cl B_i^\prime \rightarrow \cl A_i^\prime, i=1,2$ 
such that if $$\cl N_i=\{T\in B(H_i, K_i): T\zeta _i(P)=PT\;\;\forall \;P\in pr(\cl B_i^\prime )\}, \;\;i=1,2,$$ 
then $$ \cl U=[\cl N_2\cl V\cl N_1^*]^{-w^*} , \cl V=[\cl N_2^*\cl U\cl N_1]^{-w^*} , \cl A_i=[\cl N_i^*\cl N_i]
^{-w^*}, \cl B_i=[\cl N_i\cl N_i^*]^{-w^*},  i=1,2.$$
We also recall the algebras $\cl A, \cl B$ defined in Section 2. Since $\cl U$ is a hyperreflexive space, $\cl B$ is a reflexive algebra and thus by 2.7.i 
in \cite{ele}, $\cl A$ is also a reflexive algebra. Therefore $\cl V$ is a reflexive space.

\begin{lemma}\label{31} $$\cl V= \{T\in B(H_1, H_2): P_i\in pr(\cl B_i^\prime) ,i=1,2, P_2\cl UP_1=0\Rightarrow \zeta_2(P_2) 
T\zeta _1(P_1)=0\}.$$
\end{lemma}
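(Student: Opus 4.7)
\emph{Plan of proof.} Denote the right-hand side by $\cl W$. The plan is to prove the two inclusions separately, using different parts of the machinery built up in Section 2.

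For $\cl V \subseteq \cl W$, I would exploit the weak* density $\cl V=[\cl N_2^*\cl U\cl N_1]^{-w^*}$ together with the intertwining identities characterizing $\cl N_i$. Both the bilinear map $(P_2,P_1)\mapsto \zeta_2(P_2)(\cdot)\zeta_1(P_1)$ and multiplication are separately weak* continuous, so it suffices to check the implication on spanning elements $V=N_2^*UN_1$ with $N_i\in \cl N_i$ and $U\in \cl U$. For such a $V$, the defining relation $N_i\zeta_i(P_i)=P_iN_i$ yields (upon taking adjoints) $\zeta_i(P_i)N_i^*=N_i^*P_i$, and a direct calculation gives
$$\zeta_2(P_2)V\zeta_1(P_1)=\zeta_2(P_2)N_2^*UN_1\zeta_1(P_1)=N_2^*P_2UP_1N_1=0$$
whenever $P_2\cl UP_1=0$.

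For the reverse inclusion $\cl W\subseteq \cl V$, I would lift the problem to the reflexive algebra $\cl A$ built in Section 2. Since $\cl A$ is reflexive (by the remark preceding the lemma), $\cl A=\Alg(\Lat\cl A)$. The element $\begin{pmatrix} 0 & T \\ 0 & 0\end{pmatrix}$ belongs to $\cl A$ if and only if $T\in\cl V$, so testing membership in $\cl A$ against an arbitrary $Q_2\oplus Q_1\in\Lat\cl A$ gives
$$T\in\cl V\iff Q_2^\bot TQ_1=0\ \text{for every }Q_2\oplus Q_1\in\Lat\cl A.$$

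The remaining task is to translate this condition into one involving $\cl B$-projections and $\cl U$. By the discussion following Lemma~\ref{22}, $\zeta=\zeta_2\oplus\zeta_1$ is a $*$-isomorphism with $\zeta(\Lat\cl B)=\Lat\cl A$. So every $Q_2\oplus Q_1\in\Lat\cl A$ has the form $\zeta_2(P_2)\oplus\zeta_1(P_1)$ with $P_2\oplus P_1\in\Lat\cl B$, that is, $P_i\in pr(\cl B_i^\prime)$ and $P_2^\bot\cl UP_1=0$. Since each $\zeta_i$ is a unital $*$-isomorphism, $Q_2^\bot=\zeta_2(P_2^\bot)$; writing $R_2:=P_2^\bot$, the condition becomes: $\zeta_2(R_2)T\zeta_1(P_1)=0$ for all $R_2\in pr(\cl B_2^\prime)$, $P_1\in pr(\cl B_1^\prime)$ with $R_2\cl UP_1=0$. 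This is exactly membership in $\cl W$.

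The only potentially delicate point is verifying that the correspondence $Q_i\leftrightarrow P_i$ behaves well with orthogonal complements, which boils down to $\zeta_i$ being a unital $*$-isomorphism — and this follows from Theorem~\ref{14}(ii) and Proposition~2.8 of~\cite{ele} applied to the essential TRO's $\cl M_i$. Once that bookkeeping is in hand, the proof is essentially the bijection $\zeta$ transporting the reflexive characterization of $\cl V$ (via $\cl A$) back to a characterization via $\cl B$ and $\cl U$.
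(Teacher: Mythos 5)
Your proof is correct, and your reverse inclusion coincides with the paper's: the paper likewise embeds $T$ as the off-diagonal corner $0\oplus T$ of the $2\times 2$ algebra $\cl A$, uses reflexivity of $\cl A$ (noted just before the lemma) together with the lattice transport $\zeta(\Lat{\cl B})=\Lat{\cl A}$, $\zeta=\zeta_2\oplus\zeta_1$, and unwinds the condition $Q^\bot(0\oplus T)Q=0$ for $Q\in\Lat{\cl A}$ into the defining condition of the right-hand side, with the complement bookkeeping $\zeta_2(P_2)^\bot=\zeta_2(P_2^\bot)$ handled exactly as you describe. The genuine difference is your forward inclusion: the paper never invokes the density $\cl V=[\cl N_2^*\cl U\cl N_1]^{-w^*}$; instead it observes that $P_2\cl U P_1=0$ forces $P_2^\bot\oplus P_1\in\Lat{\cl B}$, hence $\zeta_2(P_2)^\bot\oplus\zeta_1(P_1)\in\Lat{\cl A}$, which gives $\zeta_2(P_2)\cl V\zeta_1(P_1)=0$ in one line. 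Your alternative --- verifying $\zeta_2(P_2)N_2^*UN_1\zeta_1(P_1)=N_2^*P_2UP_1N_1=0$ on generators via the intertwining relations and passing to the weak* closed span by separate weak* continuity of $T\mapsto\zeta_2(P_2)T\zeta_1(P_1)$ --- is valid and rests only on the defining property of the $\cl N_i$, so it would survive even without the explicit description of $\Lat{\cl A}$. Note, however, that your reverse-direction translation is already stated as a two-way equivalence ($T\in\cl V$ iff $\zeta_2(R_2)T\zeta_1(P_1)=0$ whenever $R_2\cl U P_1=0$), so it subsumes the forward inclusion and renders your density computation logically redundant; the paper is, in effect, that single equivalence written out as two separate inclusions.
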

\begin{proof} We denote by $\Omega $ the space 
$$\{T\in B(H_1, H_2): P_i\in pr(\cl B_i^\prime) ,i=1,2, P_2\cl UP_1=0\Rightarrow \zeta_2(P_2) 
T\zeta _1(P_1)=0\}.$$
Fix $P_i\in pr(\cl B_i^\prime) ,i=1,2$ such that $ P_2\cl UP_1=0. $  We recall $\zeta , \cl A, \cl B$ from 
Section 2. We have that $P_2^\bot \oplus P_1\in \Lat{\cl B}.$ Since 
$\zeta ( \Lat{\cl B} ) = \Lat{\cl A} $ we take $\zeta_2(P_2)^\bot \oplus  \zeta_1(P_1)\in \Lat{\cl A} . $ Therefore 
$ \zeta_2(P_2) \cl V \zeta_1(P_1)=0.$ It follows that $\cl V\subset \Omega .$

Conversely, if $T\in \Omega $ and $P_2^\bot \cl UP_1=0$ for $P_i\in pr(\cl B_i^\prime), i=1,2$, then 
$$
\left(\begin{array}{clr}  \zeta_2(P_2)^\bot    & 0 \\ 0 & \zeta_1(P_1) ^\bot \end{array}\right)
\left(\begin{array}{clr}  0   & T \\ 0 & 0 \end{array}\right)\left(\begin{array}{clr}  \zeta_2(P_2) 
  & 0 \\ 0 & \zeta_1(P_1)  \end{array}\right)=0, \;\;\forall \;\;P_2 \oplus P_1\in \Lat{\cl B} .$$ 
 Therefore $$\zeta (Q)^\bot (0\oplus T)\zeta (Q)=0\;\;\forall \;\;Q\;\in \;\Lat{\cl B}.$$ 
But $\zeta (\Lat{\cl B})= \Lat{\cl A}. $
Thus $$Q^\bot (0\oplus T)Q=0\;\;\forall \;\;Q\;\in \;\Lat{\cl A}.$$ 

 Therefore $$\left(\begin{array}{clr}  0  & T \\ 0 & 0 \end{array}\right)
\in \cl A\Rightarrow T\in \cl V.$$ We have thus proved $\Omega \subset \cl V\Rightarrow \Omega =\cl V.$
\end{proof}

We define the space $$\cl W=[\cl V\cl N_1^*]^{-w^*}\subset B(K_1, H_2).$$

\begin{lemma}\label{32} $$\cl W= \{T\in B(K_1, H_2): P_i\in pr(\cl B_i^\prime) ,i=1,2, P_2\cl UP_1=0\Rightarrow \zeta_2(P_2) 
TP_1=0\}.$$\end{lemma}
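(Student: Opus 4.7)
I would establish the two inclusions separately, using Lemma \ref{31} as the characterization of $\cl V$ and the defining intertwining identity $N\zeta_1(P)=PN$ of the TRO $\cl N_1$ to translate conditions on $H_1$ into conditions on $K_1$.

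For the inclusion $\cl W\subset\Omega$, I would first check the condition on generators of the form $VN^*$ with $V\in\cl V$ and $N\in\cl N_1$. Taking adjoints in $N\zeta_1(P)=PN$ yields $N^*P_1=\zeta_1(P_1)N^*$ for every $P_1\in pr(\cl B_1^\prime)$, so
$$\zeta_2(P_2)VN^*P_1=\zeta_2(P_2)V\zeta_1(P_1)N^*,$$
and this vanishes whenever $P_2\cl U P_1=0$ by Lemma \ref{31}. Linearity extends the condition to $[\cl V\cl N_1^*]$, and since the map $X\mapsto\zeta_2(P_2)XP_1$ is weak* to weak* continuous on $B(K_1,H_2)$, the condition persists on the weak* closure $\cl W$.

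For the inclusion $\Omega\subset\cl W$, I would first prove that $T\cl N_1\subset\cl V$ for every $T\in\Omega$. Given $N\in\cl N_1$ and projections $P_i\in pr(\cl B_i^\prime)$ with $P_2\cl U P_1=0$, the intertwining identity gives
$$\zeta_2(P_2)TN\zeta_1(P_1)=\zeta_2(P_2)T(N\zeta_1(P_1))=\zeta_2(P_2)TP_1N=0,$$
by the definition of $\Omega$, and Lemma \ref{31} then yields $TN\in\cl V$. Multiplying on the right by $\cl N_1^*$ and taking weak* closures, justified by the weak* continuity of left multiplication by the bounded operator $T$, we obtain $T[\cl N_1\cl N_1^*]^{-w^*}\subset[\cl V\cl N_1^*]^{-w^*}=\cl W$. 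The setup recalled at the start of Section 3 gives $[\cl N_1\cl N_1^*]^{-w^*}=\cl B_1$; since $\cl B_1=\cl S_{1,\phi}^\prime$ is a von Neumann algebra (the commutant of a family of projections), it contains $I_{K_1}$, and hence $T=T\cdot I_{K_1}\in\cl W$.

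I do not anticipate a serious obstacle: the argument is essentially bookkeeping, combining Lemma \ref{31} with the adjoint of the intertwining relation defining $\cl N_1$. The only mildly delicate points are weak* continuity at the two closure steps and the unitality of $\cl B_1$; both are immediate from descriptions already stated in the paper.
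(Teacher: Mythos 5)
Your proposal is correct and follows essentially the same route as the paper: both directions are checked on generators via the adjoint of the intertwining relation defining $\cl N_1$ together with Lemma \ref{31}, and the reverse inclusion is finished by $\Omega\cl N_1\subset\cl V$, $\Omega\cl N_1\cl N_1^*\subset\cl W$ and the fact that $[\cl N_1\cl N_1^*]^{-w^*}=\cl B_1$ contains the identity. Your explicit handling of the linearity/weak* closure step and of the unitality of $\cl B_1$ merely spells out details the paper leaves implicit.
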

\begin{proof} Define $$\Omega =\{T\in B(K_1, H_2): P_i\in pr(\cl B_i^\prime) ,i=1,2, P_2\cl UP_1=0\Rightarrow \zeta_2(P_2) 
TP_1=0\}.$$
Fix $P_i\in pr(\cl B_i^\prime) ,i=1,2$ such that $ P_2\cl UP_1=0$ and fix $V\in \cl V, S\in \cl N_1.$ 
We have $$ \zeta _2(P_2)VS^*P_1 = \zeta _2(P_2)V\zeta _1(P_1) S^*. $$ By Lemma \ref{31},
$\zeta _2(P_2)V\zeta _1(P_1) =0.$ Thus  $\zeta _2(P_2)VS^*P_1=0.$ We have thus proved $\cl W\subset \Omega .$ 

For the converse, fix $A\in \Omega $ and $S\in \cl N_1. $ If $P_i\in pr(\cl B_i^\prime) ,i=1,2$ such that $ P_2\cl UP_1=0$ 
we have  $$ \zeta_2(P_2) AS \zeta_1(P_1)=  \zeta_2(P_2) AP_1S=0S=0. $$
Thus $$ \Omega \cl N_1\subset \cl V \Rightarrow \Omega \cl N_1\cl N_1^*\subset \cl W .$$ 
But $[\cl N_1\cl N_1^*]^{-w^*}$ contains the identity operator. Therefore $\Omega \subset \cl W.$ The proof is complete.

\end{proof}

\begin{lemma}\label{33} $$k(\cl W)\leq k(M_I^w(\cl U)).$$
\end{lemma}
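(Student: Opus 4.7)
The plan is to compare the hyperreflexivity constants by lifting operators $T\in B(K_1,H_2)$ to operators $\hat T$ on amplified Hilbert spaces. Apply Lemma \ref{111} to the essential weak* closed TRO $\cl N_2\subset B(H_2,K_2)$ to obtain a column $N=(N_i)_{i\in I}\in C_I^w(\cl N_2)$ (padding with zeros if the cardinality of an orthonormal basis of $H_2$ is strictly less than $I$) with $N^*N=I_{H_2}$ and pairwise orthogonal initial projections $N_i^*N_i$. Given $T\in B(K_1,H_2)$, define $\hat T\in B(K_1^I,K_2^I)$ to be the matrix whose first column is $NT=(N_iT)_i$ and all of whose other entries are zero; then $\|\hat T\|=\|T\|$.

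The first key step is to show $d(T,\cl W)\le d(\hat T,M_I^w(\cl U))$. Given $A\in M_I^w(\cl U)$ with first column $A_{*,1}=(A_{i,1})_i\in C_I^w(\cl U)$, each $N_i^*A_{i,1}\in \cl N_2^*\cl U\subset \cl N_2^*\cl U[\cl N_1\cl N_1^*]^{-w^*}\subset[\cl V\cl N_1^*]^{-w^*}=\cl W$, using that $\cl B_1$ contains $I_{K_1}$ together with $\cl V=[\cl N_2^*\cl U\cl N_1]^{-w^*}$. Since the final projections of the $N_i^*$ are pairwise orthogonal, the series $N^*A_{*,1}=\sum_i N_i^*A_{i,1}$ converges strongly; its partial sums lie in $\cl W$, hence so does its weak*-limit. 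Then
\[
\|T-N^*A_{*,1}\|=\|N^*(NT-A_{*,1})\|\le\|NT-A_{*,1}\|\le\|\hat T-A\|,
\]
and taking infima gives the desired inequality.

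The next step is to prove $r_{M_I^w(\cl U)}(\hat T)=r_{\cl W}(T)$ via Lemma \ref{13}. For $M_I^w(\cl U)$ use the normalizing von Neumann algebras $\cl B_2\bar\otimes B(\ell^2(I))$ and $\cl B_1\bar\otimes B(\ell^2(I))$ (the bimodule condition follows from $\cl B_2\cl U\cl B_1\subset\cl U$); projections in their commutants have the form $Q_0^I, P_0^I$ with $Q_0\in pr(\cl B_2'), P_0\in pr(\cl B_1')$, and $Q_0^I M_I^w(\cl U)P_0^I=0$ iff $Q_0\cl UP_0=0$. Using the adjoint of the defining relation $N_2\zeta_2(P)=PN_2$ to write $Q_0N_i=N_i\zeta_2(Q_0)$, together with $\|NY\|=\|Y\|$ for any $Y$ with values in $H_2$, one computes $\|Q_0^I\hat TP_0^I\|=\|\zeta_2(Q_0)TP_0\|$. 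For $\cl W$, apply Lemma \ref{13} with normalizing algebras $\cl A_2$ on the left and $\cl B_1$ on the right (using $\cl A_2\cl W\cl B_1\subset\cl W$); projections in the commutants have the form $Q'=\zeta_2(Q_0)\in pr(\cl A_2')$ and $P_0\in pr(\cl B_1')$. The crucial point is to check that $\zeta_2(Q_0)\cl WP_0=0 \iff Q_0\cl UP_0=0$: the forward direction follows from Lemma \ref{32}, while the converse is obtained by approximating elements of $\cl W$ by elements of $\cl N_2^*\cl U\cl N_1\cl N_1^*$, applying the intertwining relations $\zeta_2(Q_0)N_2^*=N_2^*Q_0$ and $N_1\zeta_1(P_0)=P_0N_1$, and then peeling off $N_2^*$, $N_1$ via the fullness identities $[\cl N_2H_2]=K_2$ and $[\cl N_1H_1]=K_1$ (both consequences of $\cl B_i, \cl A_i$ containing identities). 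The resulting expressions for $r_{M_I^w(\cl U)}(\hat T)$ and $r_{\cl W}(T)$ are identical suprema over the same index set.

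Combining everything yields
\[
d(T,\cl W)\le d(\hat T,M_I^w(\cl U))\le k(M_I^w(\cl U))\,r_{M_I^w(\cl U)}(\hat T)=k(M_I^w(\cl U))\,r_{\cl W}(T),
\]
so $k(\cl W)\le k(M_I^w(\cl U))$. The main obstacle is the third step, specifically the equivalence of annihilator conditions for $\cl W$ and $\cl U$ under the $*$-isomorphism $\zeta_2$; this is where the essentiality of the TROs $\cl N_i$ and the intertwining relations built into their definition are used most heavily.
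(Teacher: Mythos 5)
Your proof is correct and takes essentially the same route as the paper's: both hinge on the isometric column $N\in C_I^w(\cl N_2)$ supplied by Lemma \ref{111} (padded to length $I$), the intertwining relation $QN=N\zeta_2(Q)$ for $Q\in pr(\cl B_2')$, and the combination of Lemma \ref{13} with Lemma \ref{32} to control the $r$-values. The only differences are cosmetic: by placing $NT$ in the first column of an $I\times I$ matrix you inline the paper's preliminary lemma $k(C_I^w(\cl U))\leq k(M_I^w(\cl U))$ (which the paper instead quotes, having deduced it from the Davidson--Levene compression lemma), and you establish the full equivalence $\zeta_2(Q_0)\cl W P_0=0\iff Q_0\cl U P_0=0$ to get the equality $r_{M_I^w(\cl U)}(\hat T)=r_{\cl W}(T)$, whereas the paper only needs, and proves, the one-sided estimate $r_{C_I^w(\cl U)}(NA)\leq r_{\cl W}(A)$.
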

\begin{proof}  Suppose that $I_2$ is the cardinal of an orthonormal basis of $H_2.$ We have $I_2\leq I.$  
By Lemma \ref{111}, there exists a column $N=(N_i)_{i\in I_2}$ such that and $N_i\in \cl N_2$ for all $i,$
 and $ N^*N=I_{H_2}.$  Adding zeros, if necessary, we may assume that $N=(N_i)_{i\in I}.$ We claim that 
$$\cl W=N^*  C_I ^w(\cl U) .$$ Indeed 
$$ \cl N_2^*\cl U= [\cl N_2^*\cl N_2\cl V\cl N_1^*]^{-w^*} \subset [\cl V\cl N_1^*]^{-w^*} =\cl W.$$
Thus $$N^*C_I^w(\cl U)\subset \cl W.$$
Since $ N\cl W\subset C_I ^w(\cl U)$, we have $$ N^*N \cl W\subset N^*C_I ^w(\cl U)\Rightarrow 
\cl W\subset N^* C_I ^w(\cl U) .$$ So the claim holds. 
In the sequel we use the fact $$k( C_I ^w(\cl U) )\leq  k( M_I ^w(\cl U) ) .$$

Fix $A\in B(K_1, H_2). $ We have 
\begin{align*} d(A, \cl W)=&  \inf_{W\in \cl W}\|A-W\| = \inf_{U\in C_I ^w(\cl U) }\|A-N^*U\| =\\&
\inf_{U\in  C_I ^w(\cl U)  }\|N^*NA-N^*U\|  \leq \inf_{U\in C_I ^w(\cl U) }\|NA-U\|  =\\&
d(NA,  C_I ^w(\cl U)  )\leq  k( M_I ^w(\cl U) ) r_{ C_I ^w(\cl U)  }(NA).
\end{align*}
Since $\cl U$ is a $\cl B_2-\cl B_1$ bimodule, $ C_I ^w(\cl U)  $ is a $M_I^w(\cl B_2) -\cl B_1$ bimodule. 
Therefore, for any $\epsilon >0$, there exist  $P_i\in pr(\cl B_i^\prime)$ such that  $P_2\cl UP_1=0$ and 
 \begin{align*} r_{ C_I ^w(\cl U)  }(NA)-\epsilon <& \|P_2^I NAP_1\|=         \|N\zeta _2(P_2)AP_1\|     \leq \\& 
 \| \zeta _2(P_2)AP_1 \|  .   \end{align*} 
By Lemma \ref{32}, $\zeta _2(P_2)\cl WP_1 =0,$  thus $$\| \zeta _2(P_2)AP_1 \|  \leq r_\cl W(A).$$ 
Since $\epsilon $ is arbitrary, $$ r_{ C_I ^w(\cl U)  }(NA)\leq r_\cl W(A).$$  We have thus proved that 
$$d(A, \cl W)\leq  k(M_I^w(\cl U)) r_\cl W(A).$$   The proof is complete.

\end{proof}

\begin{lemma}\label{34} $$k(M_I^w(\cl W)) \leq k(M_I^w(\cl U)) $$
\end{lemma}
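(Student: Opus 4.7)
The plan is to bootstrap Lemma \ref{33} by ampliation: replace every space in sight by its $I \times I$ matrix amplification, observe that all of the structural hypotheses of Lemma \ref{33} are preserved, and then use the fact that $I \cdot I = I$ for infinite $I$ so that ampliating twice costs nothing.

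First, I would introduce the ampliated objects $\tilde{\cl U} = M_I^w(\cl U)$, $\tilde{\cl V} = M_I^w(\cl V)$, $\tilde{\cl N}_i = M_I^w(\cl N_i)$, $\tilde{\cl A}_i = M_I^w(\cl A_i)$, and $\tilde{\cl B}_i = M_I^w(\cl B_i)$, acting on the Hilbert spaces $K_j^I$, $H_j^I$. Since matrix-ampliation commutes with $w^*$-closure, adjoints, and products of ampliated objects, the weak TRO equivalences pass through: $\tilde{\cl U} = [\tilde{\cl N}_2 \tilde{\cl V}\tilde{\cl N}_1^*]^{-w^*}$, $\tilde{\cl V} = [\tilde{\cl N}_2^* \tilde{\cl U}\tilde{\cl N}_1]^{-w^*}$, and $\tilde{\cl A}_i = [\tilde{\cl N}_i^*\tilde{\cl N}_i]^{-w^*}$, $\tilde{\cl B}_i = [\tilde{\cl N}_i\tilde{\cl N}_i^*]^{-w^*}$. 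The TROs $\tilde{\cl N}_i$ remain essential because $I_{H_i^I} = I_{H_i}^I$ and $I_{K_i^I} = I_{K_i}^I$ still lie in the ampliated algebras.

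Next I would identify the analogue of $\cl W$ at the ampliated level. Setting $\tilde{\cl W} := [\tilde{\cl V}\tilde{\cl N}_1^*]^{-w^*} \subset B(K_1^I, H_2^I)$, I claim $\tilde{\cl W} = M_I^w(\cl W)$; this is just the routine observation that entrywise $\cl V\cl N_1^*$ is exactly what generates $M_I^w([\cl V\cl N_1^*]^{-w^*})$, together with the $w^*$-continuity of the obvious inclusions. Now apply Lemma \ref{33} to the ampliated data $\tilde{\cl U}, \tilde{\cl V}, \tilde{\cl N}_i$, using a cardinal $J$ which must dominate the orthonormal basis cardinalities of $H_i^I$. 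But $|H_i^I| = I \cdot I_i = I$ since $I$ is infinite and $I \geq I_i$, so $J = I$ works. The conclusion is
\begin{equation*}
k(\tilde{\cl W}) \leq k\bigl(M_I^w(\tilde{\cl U})\bigr) = k\bigl(M_I^w(M_I^w(\cl U))\bigr).
\end{equation*}

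Finally, I would invoke the unitary equivalence $M_I^w(M_I^w(\cl U)) \cong \cl U \bar\otimes B(l^2(I)) \bar\otimes B(l^2(I)) \cong \cl U \bar\otimes B(l^2(I \times I)) \cong M_I^w(\cl U)$, using $I \times I = I$ for an infinite cardinal $I$. Hyperreflexivity constants are preserved under unitary equivalence, so $k(M_I^w(M_I^w(\cl U))) = k(M_I^w(\cl U))$, yielding $k(M_I^w(\cl W)) = k(\tilde{\cl W}) \leq k(M_I^w(\cl U))$.

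The only real obstacle is the bookkeeping check that the ampliated setup genuinely satisfies the exact hypotheses used in the proof of Lemma \ref{33}, namely the essentiality of $\tilde{\cl N}_2$, the $\tilde{\cl B}_2$--$\tilde{\cl B}_1$ bimodule structure of $\tilde{\cl U}$, and the existence of an appropriate column $\tilde{N} \in C_I^w(\tilde{\cl N}_2)$ with $\tilde{N}^*\tilde{N} = I_{H_2^I}$ (supplied by Lemma \ref{111} applied to $\tilde{\cl N}_2$ with the orthonormal basis of $H_2^I$ of cardinality $I$). Everything else is functorial with respect to ampliation.
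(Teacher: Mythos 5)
Your proposal is correct and matches the paper's own proof: the paper likewise ampliates the whole TRO-equivalence setup by $M_I^w(\cdot)$, applies Lemma \ref{33} to the ampliated data to get $k(M_I^w(\cl W)) \leq k(M_I^w(M_I^w(\cl U)))$, and then uses that the latter equals $k(M_I^w(\cl U))$ because $I$ is infinite. Your write-up is actually more careful than the paper's (which compresses the bookkeeping into ``following the above arguments from the beginning to Lemma \ref{33}''), explicitly verifying the basis-cardinality condition $I \cdot I_i = I$ and the identification $[M_I^w(\cl V)M_I^w(\cl N_1)^*]^{-w^*} = M_I^w(\cl W)$.
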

\begin{proof} We can see that the spaces $ M_I ^w(\cl U) , M_I ^w(\cl V) $ are weak TRO equivalent: 
$$M_I ^w(\cl U)=[ M_I ^w(\cl N_2) M_I ^w(\cl V)M_I^w(\cl N_1)^*]^{-w^*}, $$$$
 M_I ^w(\cl V)=[ M_I ^w(\cl N_2)^* M_I ^w(\cl U)M_I ^w(\cl N_1)]^{-w^*}  .$$ 
Following the above arguments from the beginning to Lemma \ref{33}, 
the space $$M_I ^w(\cl W) =[M_I ^w(\cl V) M_I ^w(\cl N_1)^*]^{-w^*} $$ 
has  hyperreflexivity constant less than or equal to  the hyperreflexivity constant of $M_I^w(M_I^w(\cl U)) $ 
which, since $I$ is infinite, is equal to the hyperreflexivity constant of $M_I^w(\cl U).$
\end{proof}

\begin{lemma}\label{35} $$k(\cl V)\leq k(M_I^w(\cl U)).$$
\end{lemma}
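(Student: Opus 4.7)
The plan is to run the argument of Lemma~\ref{33} in a parallel form, this time cutting off $\cl N_1$ from $\cl W$ on the right. First, apply Lemma~\ref{111} to the essential TRO $\cl N_1\subset B(H_1,K_1)$: since $I$ is at least the cardinality of an orthonormal basis of $H_1$, after padding with zeros one obtains a column $L=(L_i)_{i\in I}\in C_I^w(\cl N_1)$ with $L^*L=I_{H_1}$. From $\cl V\cl N_1^*\cl N_1\subset\cl V$ and $I_{H_1}\in\cl A_1=[\cl N_1^*\cl N_1]^{-w^*}$ one has $\cl V=[\cl W\cl N_1]^{-w^*}$. Moreover, for each $V\in\cl V$ the operator $VL^*=(VL_i^*)_{i\in I}$ lies in the row analog $R_I^w(\cl W)$ of $C_I^w(\cl W)$, because each $VL_i^*\in\cl V\cl N_1^*\subset\cl W$; together with $V=VL^*L$ this gives the representation $\cl V=R_I^w(\cl W)\,L$.

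For any $T\in B(H_1,H_2)$, writing $T=TL^*L$ yields
\[
d(T,\cl V)=\inf_{R\in R_I^w(\cl W)}\|T-RL\|\leq\inf_R\|TL^*-R\|=d(TL^*,R_I^w(\cl W)).
\]
The row analog of the inequality $k(C_I^w(\cl U))\leq k(M_I^w(\cl U))$, combined with Lemma~\ref{34}, gives $k(R_I^w(\cl W))\leq k(M_I^w(\cl W))\leq k(M_I^w(\cl U))$, and hence
\[
d(T,\cl V)\leq k(M_I^w(\cl U))\,r_{R_I^w(\cl W)}(TL^*).
\]

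The substantive step is to show $r_{R_I^w(\cl W)}(TL^*)\leq r_\cl V(T)$. Since $\cl W$ is an $\cl A_2$-$\cl B_1$ bimodule, $R_I^w(\cl W)$ is an $\cl A_2$-$M_I^w(\cl B_1)$ bimodule, and projections in $M_I^w(\cl B_1)'$ are exactly those of the form $P_1^I$ with $P_1\in pr(\cl B_1')$. Applying Lemma~\ref{13} and using the intertwining $L^*P_1^I=\zeta_1(P_1)L^*$ (immediate from $N_1^*P_1=\zeta_1(P_1)N_1^*$ for each $N_1\in\cl N_1$), one obtains
\[
r_{R_I^w(\cl W)}(TL^*)\leq\sup\{\|\zeta_2(P_2)T\zeta_1(P_1)\|:P_i\in pr(\cl B_i'),\ \zeta_2(P_2)\cl W P_1=0\}.
\]
The main obstacle is the implication $\zeta_2(P_2)\cl W P_1=0\Rightarrow\zeta_2(P_2)\cl V\zeta_1(P_1)=0$. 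To prove it, substitute $\cl W\supset\cl V\cl N_1^*$, push $P_1$ past $\cl N_1^*$ via the intertwining to get $\zeta_2(P_2)\cl V\zeta_1(P_1)\cl N_1^*=0$, right-multiply by $\cl N_1$, and take weak* closure to obtain $\zeta_2(P_2)\cl V\zeta_1(P_1)\cl A_1=0$, which collapses because $I_{H_1}\in\cl A_1$. Once this is established, Lemma~\ref{13} applied to $\cl V$ bounds $\|\zeta_2(P_2)T\zeta_1(P_1)\|$ by $r_\cl V(T)$, yielding the desired inequality $k(\cl V)\leq k(M_I^w(\cl U))$.
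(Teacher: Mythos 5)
Your proposal is correct and takes essentially the same route as the paper: the paper proves the equivalent bound $k(\cl V^*)\leq k(M_I^w(\cl U))$ by writing $\cl V^*=M^*C_I^w(\cl W^*)$ for a column $M\in C_I^w(\cl N_1)$ with $M^*M=I_{H_1}$, and your argument is exactly this computation transposed, with $\cl V=R_I^w(\cl W)L$ and rows in place of adjoints and columns. Your key ingredients --- the column from Lemma~\ref{111}, the factorization through $\cl W$, the appeal to Lemma~\ref{34} (together with the row/column reduction), the intertwining $L^*P_1^I=\zeta_1(P_1)L^*$, and the implication $\zeta_2(P_2)\cl W P_1=0\Rightarrow\zeta_2(P_2)\cl V\zeta_1(P_1)=0$ via right multiplication by $\cl N_1$ and $I_{H_1}\in\cl A_1$ --- correspond one-for-one to the steps in the paper's proof.
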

\begin{proof} Let $I_1$ be the cardinal of an orthonormal basis of $H_1.$ 
We can find an infinite column $M=(M_i)_{i\in I_1}, M_i\in \cl N_1$ such that 
$M^*M=I_{H_1},$ (Lemma \ref{111}). Adding zeros if necessary, we may assume that $M=(M_i)_{i\in I}.$ We have 
$$\cl V \cl N_1^*\cl N_1 \subset \cl V\Rightarrow \cl N_1^*\cl N_1 \cl V^*\subset \cl V^*\Rightarrow \cl N_1^*\cl W^*\subset \cl V^*.$$ 
Therefore $$M^* C_I ^w(\cl W^*) \subset \cl V^*.$$
On the other hand, $\cl V^*=M^*M\cl V^*.$ Since $M\cl V^*\subset C_I ^w(\cl W^*)$, we have 
$$\cl V^*\subset M^*C_I ^w(\cl W^*) \Rightarrow \cl V^*=M^*C_I ^w(\cl W^*) .$$
Choose $T\in B(H_2, H_1)$.  Using Lemma \ref{34}, we have
\begin{align*} d(T, \cl V^*)=  & \inf_{V\in \cl V}\|T-V^*\|  = \inf_{S\in C_I ^w(\cl W^*) 
}\|T-M^*S\| =  \inf_{S\in C_I ^w(\cl W^*) 
}\|M^*MT-M^*S\| \leq\\ & \inf_{S\in C_I ^w(\cl W^*) 
}\|MT-S\|\leq k(M_I^w(\cl U)) r_{ C_I ^w(\cl W^*)  }(MT).\end{align*} 
Fix $\epsilon >0.$ Since $\cl W$ is an $\cl A_2-\cl B_1$ bimodule, there exist $P\in pr(\cl A_2^\prime), Q\in 
pr(\cl B_1^\prime)$ such that $QW^*P=0$ and $$  r_{C_I ^w(\cl W^*)  }(MT)-\epsilon   <\|Q^I MTP\|=
 \|M\zeta _1(Q)TP\|   \leq  \|\zeta _1(Q)TP\| .  $$
We have 
$$P\cl WQ=0\Rightarrow P\cl V\cl N_1^*Q=0\Rightarrow P\cl V\zeta _1(Q)=0\Rightarrow \zeta _1(Q)\cl V^*P=0.$$
Therefore $$ r_{C_I ^w(\cl W^*)  }(MT) -\epsilon   < r_{\cl V^*}(T) .$$ Since 
$\epsilon $ is arbitrary, we have 
$$r_{C_I ^w(\cl W^*)  }(MT) \leq r_{\cl V^*}(T) \Rightarrow d(T, \cl V^*)\leq k(M_I^w(\cl U)) r_{\cl V^*}(T) .$$
Therefore $\cl V^*$, and hence $\cl V$ has  hyperreflexivity constant less than $k(M_I^w(\cl U)) .$

\end{proof}

\begin{theorem}\label{36} Let $\cl U, \cl V, H_1, H_2, K_1, K_2, I$ be as in the beginning of this section. 
Then $$k(M_I^w(\cl V))\leq k(M_I^w(\cl U)).$$ In the special case that $H_1, H_2, K_1, K_2$ are separable, we have 
$$k(M_\infty ^w(\cl V))= k(M_\infty ^w(\cl U)).$$

\end{theorem}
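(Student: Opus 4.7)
The plan is to bootstrap Lemma~\ref{35} by applying it to the amplified weak TRO equivalent pair $M_I^w(\cl U)$ and $M_I^w(\cl V)$, and then collapse the resulting double amplification via the cardinal identity $I\times I=I$ for infinite $I$.

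First, I would check that the amplified pair still satisfies the standing hypotheses of this section. The spaces $M_I^w(\cl U)\subset B(K_1^I,K_2^I)$ and $M_I^w(\cl V)\subset B(H_1^I,H_2^I)$ are weak* closed and remain essential, because $\mathrm{Map}(M_I^w(\cl U))$ sends $I_{K_1^I}=I_{K_1}^I$ to $\phi(I_{K_1})^I=I_{K_2}^I$, and similarly for the adjoint map and for $\cl V$. They are weak TRO equivalent via the amplified TRO's $M_I^w(\cl N_i)$, $i=1,2$, since matrix multiplication preserves the defining identities
\[
\cl U=[\cl N_2\cl V\cl N_1^*]^{-w^*},\qquad \cl V=[\cl N_2^*\cl U\cl N_1]^{-w^*}.
\]
These amplified TRO's are essential as well: $I_{K_i^I}\in [M_I^w(\cl N_i)M_I^w(\cl N_i)^*]^{-w^*}$ follows from the fact that $I_{K_i}\in[\cl N_i\cl N_i^*]^{-w^*}$, and likewise on the domain side. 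Finally, the cardinal condition required by the section is that $I$ dominate the cardinal of an orthonormal basis of each $H_i^I$; this dimension equals $I\cdot I_i=I$ since $I$ is infinite and $I\geq I_i$, so $I$ itself works.

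Next, applying Lemma~\ref{35} to the weak TRO equivalent pair $(M_I^w(\cl U),M_I^w(\cl V))$ yields
\[
k(M_I^w(\cl V))\leq k\bigl(M_I^w(M_I^w(\cl U))\bigr).
\]
A bijection $I\times I\to I$ induces a spatial unitary $K_j^{I\times I}\cong K_j^I$ implementing a weak* continuous complete isometry $M_I^w(M_I^w(\cl U))\cong M_I^w(\cl U)$, and unitary equivalence preserves the hyperreflexivity constant. This identifies the right-hand side with $k(M_I^w(\cl U))$, giving the main inequality.

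For the equality in the separable case, weak TRO equivalence is symmetric in $\cl U$ and $\cl V$ (interchange the TRO's $\cl M_1\leftrightarrow \cl M_1$ and $\cl M_2\leftrightarrow \cl M_2$ and take adjoints), and when all four Hilbert spaces are separable, $I=\aleph_0$ is simultaneously admissible for both directions. Running the same argument with the roles of $\cl U$ and $\cl V$ interchanged yields $k(M_\infty^w(\cl U))\leq k(M_\infty^w(\cl V))$, and combining the two inequalities produces equality. I do not expect any substantive obstacle; the only bookkeeping that requires mild care is verifying essentiality of the amplified spaces and TRO's and tracking the cardinal comparison through amplification, both of which are routine.
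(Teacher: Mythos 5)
Your proposal is correct and follows essentially the same route as the paper: the paper likewise applies the argument culminating in Lemma~\ref{35} to the amplified weak TRO equivalent pair $M_I^w(\cl U)$, $M_I^w(\cl V)$ and then uses $k(M_I^w(M_I^w(\cl U)))=k(M_I^w(\cl U))$ for infinite $I$, with the separable equality obtained by symmetry exactly as you do. Your explicit verification of essentiality and of the cardinal bookkeeping ($\dim H_i^I = I\cdot I_i = I$) fills in details the paper leaves implicit, and the only slip is the typo ``$\cl M_1\leftrightarrow \cl M_1$'' where you mean passing to the adjoint TRO's $\cl M_i^*$.
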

\begin{proof}  The spaces $ M_I ^w(\cl U) , M_I ^w(\cl V) $ are weak TRO equivalent: 
$$M_I^w(\cl U)=[ M_I ^w(\cl N_2) M_I ^w(\cl V) M_I ^w(\cl N_1)^*]^{-w^*}, $$$$
 M_I ^w(\cl V)=[ M_I ^w(\cl N_2)^* M_I ^w(\cl U)M_I ^w(\cl N_1)]^{-w^*}  .$$ 
Following the arguments from the beginning to Lemma \ref{35}, $M_I ^w(\cl V)$ has hyperreflexivity constant less than or equal to  
$k(M_I^w(M_I^w(\cl  U))=k(M_I^w(\cl U)).$ 

If $\cl U, \cl V$ are separably acting spaces, then by the first part of the proof, $$k(M_\infty ^w(\cl V))\leq  k(M_\infty ^w(\cl U)).$$ 
By symmetry $$k(M_\infty ^w(\cl U))\leq  k(M_\infty ^w(\cl V)).$$ 
\end{proof}

\section{Isomorphisms and complete hyperreflexivity.}

In this section, for each reflexive space $\cl X$ we write  $k_c(\cl X)$ for its complete hyperreflexivity 
constant, $k(M_\infty ^w(\cl X).$

\begin{theorem}\label{42} Let $\cl B, \cl A$ be stably isomorphic CSL algebras acting on the separable Hilbert spaces $K, H$ 
respectively. If $\cl B$ is completely hyperreflexive, then $\cl A$ is also completely hyperreflexive and $k_c(B)=k_c(A).$
\end{theorem}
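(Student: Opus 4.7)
The strategy is to combine Theorems \ref{12} and \ref{36}. Since $\cl A$ and $\cl B$ are stably isomorphic dual operator spaces, the converse half of Theorem \ref{12} supplies completely isometric weak* continuous representations $\phi: \cl A \to B(H')$ and $\psi: \cl B \to B(K')$ whose images $\phi(\cl A), \psi(\cl B)$ are weak TRO equivalent. My first step would be to verify that these images may be taken to act on separable Hilbert spaces (by inspecting the construction in \cite{ept} starting from separably acting $\cl A, \cl B$) and to be essential---which, since $\cl A$ and $\cl B$ are unital CSL algebras, reduces at worst to compressing to the subspaces cut out by the unit projections $\phi(1), \psi(1)$ of the image algebras.

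Next, I would observe that the complete hyperreflexivity constant $k_c(\cl X) = k(M_\infty^w(\cl X)) = k(\cl X \bar\otimes B(\ell^2(\mathbb N)))$ is an invariant of the dual operator space structure: a completely isometric weak* continuous bijection $\Phi: \cl X \to \cl Y$ induces a completely isometric weak* continuous bijection $M_\infty^w(\cl X) \to M_\infty^w(\cl Y)$, preserving both the norm-distance $d(T,\cdot)$ and the rank-one preannihilator functionals that govern $r_{\cl U}(T)$. Consequently $k_c(\cl A) = k_c(\phi(\cl A))$ and $k_c(\cl B) = k_c(\psi(\cl B))$.

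Finally, applying Theorem \ref{36} to the weak TRO equivalent, essential, separably acting spaces $\phi(\cl A)$ and $\psi(\cl B)$ gives $k_c(\phi(\cl A)) = k_c(\psi(\cl B))$; chaining the equalities yields $k_c(\cl A) = k_c(\cl B)$, whence $\cl A$ is completely hyperreflexive iff $\cl B$ is. The main obstacle I anticipate is step two: pinning down the invariance of $k_c$ under completely isometric weak* continuous isomorphism, since $r_{\cl U}(T)$ is defined using concrete Hilbert space vectors, and one must justify carefully that this quantity depends only on intrinsic dual operator space data (the norm on $B(H^\infty)/\cl U_\perp^\perp$ and the distinguished class of rank-one normal functionals).
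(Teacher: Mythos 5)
Your step two is false, and it is the load-bearing step. Hyperreflexivity and the constant $k_c$ are \emph{spatial} invariants, not invariants of the abstract dual operator space structure: a completely isometric weak* homeomorphism $\Phi:\cl X\rightarrow \cl Y$ is defined only on the space itself and induces nothing on the ambient operators $T\in B(K_1^\infty, K_2^\infty)$ in which $d(T, M_\infty^w(\cl X))$ and $r_{M_\infty^w(\cl X)}(T)$ are computed; the rank-one functionals defining $r_{\cl U}(\cdot)$ live in the predual of the ambient $B$-space, not of $\cl X$, so there is nothing to transport. The paper's own Remark \ref{44} refutes your claim outright: since $M_\infty^w(M_\infty^w(\cl X))$ is unitarily equivalent to $M_\infty^w(\cl X)$, your invariance principle, applied to the completely isometric weak* isomorphism $M_\infty^w(\cl X)\cong M_\infty^w(\cl Y)$ that constitutes a stable isomorphism, would force $k_c(\cl X)=k_c(\cl Y)$ for \emph{every} stably isomorphic pair; yet $\cl X=\mathbb C I_H$ and $\cl Y=B(H)$ are stably isomorphic with $k_c(\cl Y)=1<k_c(\cl X)$ (Lemma 6.11 in \cite{dale}). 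An even more elementary instance: $\mathbb C$ acting on a one-dimensional space and $\mathbb C I_H$ are completely isometrically weak* isomorphic, but $k(M_\infty^w(\mathbb C))=k(B(\ell^2))=1$ while $k(M_\infty^w(\mathbb C I_H))>1$. So the obstacle you flagged at the end is not a technicality to be "pinned down carefully"; it is a counterexample, and your route through Theorem \ref{12} cannot be repaired in general, because after passing to the representations $\phi,\psi$ you have no way to return to $\cl A$ and $\cl B$.

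The paper's proof avoids representations entirely, and this is exactly where the CSL hypothesis does its work: by Theorem 3.2 in \cite{ele2} together with the main result of \cite{ep}, two stably isomorphic CSL algebras acting on separable Hilbert spaces are weak TRO equivalent \emph{in their given concrete representations}. Since CSL algebras are unital, they are essential, and Theorem \ref{36} then applies verbatim to give $k_c(\cl A)=k(M_\infty^w(\cl A))=k(M_\infty^w(\cl B))=k_c(\cl B)$. In other words, the theorem is restricted to CSL algebras precisely because for them (unlike for general dual operator spaces) stable isomorphism can be upgraded to a spatial equivalence of the original algebras, which is the only kind of equivalence that the hyperreflexivity machinery of Section 3 can see. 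Your first step, tracking separability and essentiality through the construction of \cite{ept}, is therefore unnecessary in the correct argument---and insufficient in yours.
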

\begin{proof} By Theorem 3.2 in \cite{ele2} and the main result of \cite{ep}, the algebras $\cl B$ and $\cl A$ are weak TRO equivalent. The conclusion comes from Theorem \ref{36}.
\end{proof}

\begin{corollary}\label{43} Let  $\cl B, \cl A$ be  CSL algebras acting on the separable Hilbert spaces $K, H$ 
respectively. We assume that  $\cl B$ is completely hyperreflexive. If either 

(i) $\cl A$ is not completely hyperreflexive, or 

(ii) $\cl A$ is completely hyperreflexive, but $k_c(\cl A)\neq  k_c(\cl B)$,

\noindent then $\cl B$ and $\cl A$ cannot be stably isomorphic.
\end{corollary}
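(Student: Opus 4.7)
The plan is to observe that this corollary is nothing more than the contrapositive of Theorem \ref{42}, so the argument reduces to a one-step contradiction. I would begin by assuming, toward a contradiction, that $\cl B$ and $\cl A$ are stably isomorphic. Since $\cl B$ is completely hyperreflexive by hypothesis and both algebras act on separable Hilbert spaces, Theorem \ref{42} applies directly and yields that $\cl A$ is completely hyperreflexive with $k_c(\cl A) = k_c(\cl B)$.

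I would then dispose of both cases in turn. In case (i), the derived complete hyperreflexivity of $\cl A$ flatly contradicts the standing hypothesis that $\cl A$ is not completely hyperreflexive. In case (ii), the derived equality $k_c(\cl A) = k_c(\cl B)$ contradicts the hypothesis $k_c(\cl A) \neq k_c(\cl B)$. In either case the assumption of stable isomorphism is untenable, and the corollary follows.

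I do not expect a genuine obstacle here: all the substantive work has already been absorbed into Theorem \ref{42}, namely the identification of stably isomorphic CSL algebras with weak TRO equivalent ones (via Theorem 3.2 of \cite{ele2} together with the main result of \cite{ep}) and the transfer of the complete hyperreflexivity constant under weak TRO equivalence (Theorem \ref{36}, in the separably acting case). The corollary is thus only a repackaging of Theorem \ref{42} in contrapositive form, tailored to give a practical obstruction to stable isomorphism for pairs of CSL algebras.
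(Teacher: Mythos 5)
Your proposal is correct and matches the paper's intent exactly: the paper gives no separate proof for this corollary precisely because it is the contrapositive of Theorem \ref{42}, which is the one-step argument you spell out. Both cases are handled as you describe, so there is nothing to add.
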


\begin{remark}\label{44} In view of Theorem \ref{42}, we remark that  two stably isomorphic completely hyperreflexive spaces need not have 
the same complete hyperreflexivity constant. For example, take $H=l^2(\mathbb N), I_H$ the corresponding identity operator, 
$\cl X=\mathbb C I_H$ and $\cl Y=B(H).$ Since the space $\cl X\bar \otimes B(H)$ is  isomorphic as a dual operator space 
with  $\cl Y\bar \otimes B(H), \cl X$ and $\cl Y$ are stably isomorphic. But $k_c(\cl Y)=1$  and $k_c(\cl X)>1.$ 
(See Lemma 6.11 in \cite{dale}). 

\end{remark}

\begin{theorem}\label{45}  Let $\cl L_i\subset B(H_i), i=1,2$ be separably acting reflexive lattices. If there exists a *-isomorphism 
$\theta : \cl L_1^{\prime \prime} \rightarrow \cl L_2^{\prime \prime} $ such that $\theta (\cl L_1)=\cl L_2$, then the algebras 
$ \Alg{\cl L_1} , \Alg{\cl L_2} $ are weak TRO equivalent,  (Theorem 3.3 in \cite{ele}). Therefore,
by Theorem \ref{36}, $k_c(\Alg{\cl L_1})=k_c(\Alg{\cl L_2} ).$  
\end{theorem}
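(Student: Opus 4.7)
The plan is to chain together two previously established results so that the equality of the complete hyperreflexivity constants drops out as a corollary. First I would use the hypothesis on the lattices to produce a weak TRO equivalence between the reflexive algebras $\Alg{\cl L_1}$ and $\Alg{\cl L_2}$, and then I would feed this into Theorem \ref{36} to conclude $k_c(\Alg{\cl L_1}) = k_c(\Alg{\cl L_2})$.

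For the first step, the natural tool is Theorem 3.3 in \cite{ele}, which is designed for exactly this setting: starting from a *-isomorphism $\theta:\cl L_1^{\prime\prime}\to\cl L_2^{\prime\prime}$ with $\theta(\cl L_1)=\cl L_2$, it produces a TRO $\cl N\subset B(H_1,H_2)$ such that $\Alg{\cl L_2}=[\cl N\,\Alg{\cl L_1}\,\cl N^*]^{-w^*}$ and $\Alg{\cl L_1}=[\cl N^*\,\Alg{\cl L_2}\,\cl N]^{-w^*}$. Taking $\cl M_1=\cl M_2=\cl N$ in Definition \ref{11} then exhibits weak TRO equivalence of the two algebras. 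If I had to reprove this by hand, I would implement $\theta$ spatially after a suitable amplification, take $\cl N$ to be the weak* closed linear span of the resulting partial intertwiners, and then verify the two bimodule containments and density by using the lattice compatibility $\theta(\cl L_1)=\cl L_2$ to identify $\Alg{\cl L_i}$ with $\{T : T\zeta(P)=PT\}$-type descriptions as in Section 2.

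For the second step, I would apply the separable case of Theorem \ref{36} to $\cl U=\Alg{\cl L_1}$ and $\cl V=\Alg{\cl L_2}$. The hypotheses are easy to verify: $H_1$ and $H_2$ are separable by assumption, both algebras are weak* closed, and each is essential because $I\in\Alg{\cl L_i}$ forces $\phi(I)=\phi^*(I)=I$. The theorem then yields $k(M_\infty^w(\Alg{\cl L_1}))=k(M_\infty^w(\Alg{\cl L_2}))$, which by the convention of Section 4 is exactly $k_c(\Alg{\cl L_1})=k_c(\Alg{\cl L_2})$. I do not anticipate any serious obstacle: the essential technical work is already packaged inside Theorem \ref{36}, and the only piece of independent content is the construction or invocation of the weak TRO equivalence in the first step, which is the main place a reader might want to see details spelled out.
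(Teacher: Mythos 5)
Your proposal is correct and takes essentially the same route as the paper, whose entire proof consists of citing Theorem 3.3 of \cite{ele} for the weak TRO equivalence of $\Alg{\cl L_1}$ and $\Alg{\cl L_2}$ (with $\cl M_1=\cl M_2=\cl N$, the intertwiner TRO of $\theta$) and then applying the separable case of Theorem \ref{36}. Your additional verifications --- essentiality of the unital algebras $\Alg{\cl L_i}$ and the identification of $k(M_\infty^w(\cdot))$ with $k_c(\cdot)$ --- merely make explicit what the paper leaves implicit.
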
 

\begin{corollary}\label{45a} Let $\cl L_i\subset B(H_i), i=1,2$ be separably acting totally atomic CSL's. If these lattices are isomorphic 
as CSL's, then the algebras 
$ \Alg{\cl L_1} , \Alg{\cl L_2} $ are weak TRO equivalent,  (Theorem 5.3 in \cite{ele}). Therefore, by Theorem \ref{36},
  $k_c(\Alg{\cl L_1})=k_c(\Alg{\cl L_2} ).$   
\end{corollary}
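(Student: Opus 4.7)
The plan is to combine two results already at our disposal: an external input from \cite{ele} that upgrades the given CSL isomorphism to a weak TRO equivalence, and Theorem \ref{36} of the present paper, which converts weak TRO equivalence into equality of complete hyperreflexivity constants in the separable setting.

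First I would invoke Theorem 5.3 of \cite{ele}. Since the lattices $\cl L_1, \cl L_2$ are totally atomic CSLs and are isomorphic as CSLs, that theorem produces ternary rings of operators $\cl M_i\subset B(H_i, K_i)$ (with appropriate Hilbert space identifications) realising a weak TRO equivalence
\[
\Alg{\cl L_1}=[\cl M_2\,\Alg{\cl L_2}\,\cl M_1^*]^{-w^*},\qquad
\Alg{\cl L_2}=[\cl M_2^*\,\Alg{\cl L_1}\,\cl M_1]^{-w^*}.
\]
No further work is needed here; this is the content of the cited theorem.

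Next, the hypothesis that the $H_i$ are separable means both algebras act on separable Hilbert spaces, so the ``special case'' part of Theorem \ref{36} applies with $I=\aleph_0$. Since $k_c(\cl X)$ is by definition $k(M_\infty^w(\cl X))$, the separable case of Theorem \ref{36} yields directly
\[
k_c(\Alg{\cl L_1})=k(M_\infty^w(\Alg{\cl L_1}))=k(M_\infty^w(\Alg{\cl L_2}))=k_c(\Alg{\cl L_2}).
\]

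There is really no obstacle to overcome here: the corollary is a clean composition of the cited structural theorem from \cite{ele} (total atomicity plus CSL-isomorphism $\Rightarrow$ weak TRO equivalence of the reflexive algebras) with the main result Theorem \ref{36} of this paper. The only point to be careful about is that one must verify that both Hilbert spaces are separable so that the infinite cardinal $I$ in Theorem \ref{36} may indeed be taken to be $\aleph_0$ and the symmetric inequality forcing equality is available; this is exactly the standing separability assumption in the statement.
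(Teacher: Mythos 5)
Your proposal is correct and follows exactly the paper's own argument: the corollary is proved by citing Theorem 5.3 of \cite{ele} to obtain the weak TRO equivalence of $\Alg{\cl L_1}$ and $\Alg{\cl L_2}$, and then applying the separable case of Theorem \ref{36} to conclude $k_c(\Alg{\cl L_1})=k_c(\Alg{\cl L_2})$. Your added remark that separability is what licenses taking $I=\aleph_0$ and the symmetric inequality is a fair (if implicit in the paper) point of care.
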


\begin{theorem} \label{37} Let $\cl A, \cl B$ be von Neumann algebras acting on the Hilbert spaces $K$ and $H$ 
respectively. If $\pi : \cl A^\prime\rightarrow \cl B^\prime$ is a $*$-isomorphism and $I$ is an infinite cardinal greater than or equal to the cardinal 
of an orthonormal basis of $H$, then $$ k(M_I^w(\cl B)) \leq k(M_I^w(\cl A)). $$ 
 In the special case where $\cl A, \cl B$ are separably acting, we have  $$k(M_\infty ^w(\cl B))= k(M_\infty ^w(\cl A)).$$ 
\end{theorem}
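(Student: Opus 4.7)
The approach is to reduce the claim to Theorem \ref{36} by exhibiting $\cl A$ and $\cl B$ as weak TRO equivalent essential weak* closed operator spaces. Essentiality is immediate since both algebras contain their identity operators, whence $\phi(I)=\phi^*(I)=I$ for $\phi = \Map{\cl A}$, and similarly for $\cl B$. The real task is to manufacture the weak TRO equivalence from the given $*$-isomorphism of commutants.

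For this I would realise each von Neumann algebra as $\Alg{\cl L}$ for a reflexive projection lattice: set $\cl L_1 = pr(\cl A^\prime) \subset B(K)$ and $\cl L_2 = pr(\cl B^\prime) \subset B(H)$. A short self-adjointness computation shows that a projection $L$ satisfies $L^\bot \cl A L = 0$ iff $L \in \cl A^\prime$, so $\Alg{\cl L_1} = \cl A$ and $\Lat{\cl A} = \cl L_1$; hence $\cl L_1$ is reflexive, and analogously for $\cl L_2$. Since a von Neumann algebra is the weak* closure of the linear span of its projections, $\cl L_i^{\prime\prime}$ coincides with the commutant of the corresponding algebra. The $*$-isomorphism $\pi : \cl A^\prime \to \cl B^\prime$ automatically sends projections bijectively to projections, so $\pi(\cl L_1) = \cl L_2$, and therefore $\pi$ furnishes exactly the $*$-isomorphism required by the construction behind Theorem \ref{45}, namely Theorem 3.3 of \cite{ele}. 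This yields the desired weak TRO equivalence between $\cl A$ and $\cl B$.

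Theorem \ref{36}, applied with $H_1 = H_2 = H$ and $K_1 = K_2 = K$, then gives $k(M_I^w(\cl B)) \leq k(M_I^w(\cl A))$ for every infinite cardinal $I$ at least the dimension of $H$. In the separable case, running the argument with $\pi^{-1}$ in place of $\pi$ produces the reverse inequality and hence equality. The only step requiring care is the correspondence between a von Neumann algebra and its reflexive projection lattice, a routine consequence of the double commutant theorem; I do not anticipate any genuine obstacle, since all the substantive work is carried by Theorem \ref{36} and by the weak TRO equivalence construction of \cite{ele}.
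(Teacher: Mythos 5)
Your proposal is correct and takes essentially the same route as the paper: both arguments reduce to Theorem \ref{36} by extracting a weak TRO equivalence of $\cl A$ and $\cl B$ from the commutant isomorphism $\pi$ via the intertwiner-space results of \cite{ele}, with equality in the separable case by symmetry. The only difference is packaging — the paper invokes Theorem 3.2 of \cite{ele} directly with the intertwining TRO $\cl M=\{M: MA=\pi(A)M\;\;\forall\;A\in\cl A^\prime\}$, whereas you reach the very same TRO by recasting $\cl A,\cl B$ as $\Alg{pr(\cl A^\prime)}$ and $\Alg{pr(\cl B^\prime)}$ and citing the lattice version (Theorem 3.3 of \cite{ele}), which is a cosmetic rephrasing rather than a different argument.
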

\begin{proof} We define the following TRO:
$$\cl M=\{M\in B(K, H): MA=\pi (A)M \;\;\forall \;A\;\in \;\cl A\}.$$
By Theorem 3.2 in \cite{ele}, we have $$[\cl M^*\cl M] ^{-w^*}=\cl B ,\;\;\;  [\cl M\cl M^*] ^{-w^*}=\cl A.$$ 
Thus $\cl A, \cl B$ are weak TRO equivalent in the sense of this paper. The conclusion comes from Theorem \ref{36}.
\end{proof}

\begin{theorem}\label{48} Let $\cl A$ be a separably acting von Neumann algebra for which the commutant is stable, 
i.e., $\cl A^\prime$  and $M_\infty ^w(\cl A^\prime)$ are isomorphic. Then $\cl A$ is completely hyperreflexive and $k_c(\cl A)\leq 9.$
\end{theorem}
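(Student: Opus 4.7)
The hypothesis gives a $*$-isomorphism $\cl A'\cong M_\infty^w(\cl A')=\cl A'\bar\otimes B(l^2(\mathbb{N}))$; that is, the commutant of $\cl A$ is stable. My plan is to invoke Theorem~\ref{37} to replace $\cl A$ by an infinite amplification whose complete hyperreflexivity constant can then be bounded using the classical distance-formula technology for algebras with a $B(l^2)$ tensor factor on the commutant side.

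Writing $\cl A\subset B(K)$ with $K$ separable, I would introduce
$$\cl B:=\cl A\otimes \mathbb{C}I_{l^2(\mathbb{N})}\subset B(K\otimes l^2(\mathbb{N})).$$
Then $\cl B$ is separably acting and a direct computation gives $\cl B'=\cl A'\bar\otimes B(l^2(\mathbb{N}))=M_\infty^w(\cl A')$; by the stability hypothesis this last algebra is $*$-isomorphic to $\cl A'$, and so Theorem~\ref{37} applies in the separable case to yield
$$k_c(\cl A)=k_c(\cl B).$$

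It then remains to show $k_c(\cl B)\le 9$. The algebra $\cl B$ is an infinite amplification of $\cl A$, so its commutant $\cl B'$ contains $\mathbb{C}I_K\otimes B(l^2(\mathbb{N}))$ as a tensor factor, supplying a full system of matrix units $\{e_{ij}\}_{i,j\in\mathbb{N}}$ in $\cl B'$. Using this matrix-unit system one can run a standard averaging / conditional-expectation argument of the sort carried out in \cite{dale} (in the spirit of Lemma~6.11 there) to control the distance from an arbitrary operator to $M_\infty^w(\cl B)$ by a constant multiple of $r_{M_\infty^w(\cl B)}$. The explicit constant $9=3\cdot 3$ arises as the product of two multiplicatively composed factor-of-$3$ estimates: one for the $B(l^2)$ tensor factor already present in $\cl B'$, and one for the additional $B(l^2)$ tensor factor that appears when passing from $\cl B$ to its complete amplification $M_\infty^w(\cl B)$. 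Combining with the equality $k_c(\cl A)=k_c(\cl B)$ gives the advertised $k_c(\cl A)\le 9$.

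The main obstacle is this last step: the reduction via Theorem~\ref{37} is clean and essentially formal, but pinning down the explicit constant $9$ requires a careful execution of the averaging estimates of \cite{dale}. In particular, one must arrange the argument so that the two factor-of-$3$ bounds -- the one coming from the matrix units already in $\cl B'$, and the one coming from the further amplification -- compose \emph{multiplicatively} rather than additively, which is a book-keeping point in the distance formula rather than a conceptual difficulty.
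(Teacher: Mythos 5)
Your first step coincides exactly with the paper's: set $\cl B=\cl A^\infty=\cl A\otimes \mathbb{C}I_{l^2(\mathbb N)}$, compute $\cl B^\prime=\cl A^\prime\bar\otimes B(l^2(\mathbb N))=M_\infty^w(\cl A^\prime)\cong\cl A^\prime$ by the stability hypothesis, and apply Theorem \ref{37} (both algebras act on separable spaces) to conclude $k_c(\cl A)=k_c(\cl B)$. That reduction is correct and is precisely how the paper begins.

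The genuine gap is the second step, which you flag yourself but do not close: you never prove $k_c(\cl B)\le 9$. You propose an unspecified averaging/conditional-expectation argument with a conjectured multiplicative factorization $9=3\cdot 3$, but no such pair of factor-of-$3$ estimates is supplied or available where you point: Lemma 6.11 of \cite{dale} concerns the complete hyperreflexivity constants of $\mathbb{C}I$ and $B(H)$ (it is what the paper uses in Remark \ref{44}) and contains no averaging machinery of the kind you would need, so executing your sketch would amount to reproving a known theorem from scratch. The missing idea is a simple tensor flip: $M_\infty^w(\cl B)=(\cl A\otimes\mathbb{C}I)\bar\otimes B(l^2(\mathbb N))$ is unitarily equivalent, by swapping the two $l^2(\mathbb N)$ factors, to $\bigl(\cl A\bar\otimes B(l^2(\mathbb N))\bigr)\otimes\mathbb{C}I=M_\infty^w(\cl A)^\infty$, i.e.\ it is itself an infinite \emph{ampliation}. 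Kraus and Larson \cite{kl} proved that every such ampliation is hyperreflexive with constant at most $9$, so $k(M_\infty^w(\cl A^\infty))\le 9$ follows by direct citation, with no new distance estimates; combined with the Theorem \ref{37} reduction this yields $k_c(\cl A)\le 9$. Without this observation (or a full execution of your averaging plan, whose claimed source of the constant $9$ does not match how that constant actually arises in \cite{kl}), the bound is not established.
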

\begin{proof} The algebra $M_\infty ^w(\cl A^\infty )$ is unitarily equivalent to $M_\infty ^w(\cl A )^\infty .$ 
The last algebra is hyperreflexive with constant less than $9,$ \cite{kl}. Thus $k(M_\infty ^w(\cl A^\infty ))\leq 9.$  
Since by hypothesis the commutants $(\cl A^\infty )^\prime$ and $\cl A^\prime$ are isomorphic, Theorem \ref{37}  
implies that $$k(M_\infty ^w(\cl A))=k(M_\infty ^w(\cl A^\infty ))\leq 9.$$
\end{proof}

\end{document}